\newcommand{\commentout}[1]{}
\newcommand{\R}{\mathbb{R}}
\newcommand{\N}{\mathbb{N}}
\newcommand{\1}{{\mathchoice {\rm 1\mskip-4mu l} {\rm 1\mskip-4mu l}
{\rm 1\mskip-4.5mu l} {\rm 1\mskip-5mu l}}}
\newcommand {\al} {\alpha}
\newcommand {\af} {\mathfrak{a}}
\newcommand {\e}  {\varepsilon}
\newcommand {\lb} {\lambda}
\newcommand {\Lb} {\Lambda}
\newcommand {\Chi} {{\bf \raise 2pt \hbox{$\chi$}} }
\newcommand {\U} { {\mathcal U} }
\newcommand {\UU} { {\mathcal V} }
\newcommand {\f}   {\frac}
\newcommand {\p}   {\partial}
\newcommand{\dis}{\displaystyle}
\newcommand{\beq}{\begin{equation}}
\newcommand{\beqa} {\begin{array}{rl}}
\newcommand{\eeq}{\end{equation}}
\newcommand{\eeqa}{\end{array}}
\newtheorem{theorem}{Theorem}
\newtheorem{lemma}{Lemma}
\newtheorem{proposition}{Proposition}
\newtheorem{corollary}{Corollary}
\newtheorem*{theoremDG}{Theorem \cite{DG}}
\newtheorem*{theoremCL1}{Theorem \cite{CL1}}
\newtheorem*{theoremCL2}{Theorem \cite{CL2}}
\newtheorem*{corollaryMDG}{Corollary \cite{DG,M1}}
\DeclareMathOperator*{\supess}{sup\,ess}
\DeclareMathOperator*{\infess}{inf\,ess}
\newcommand{\vincent}[1]{\textbf{#1}}
\title{\LARGE Self-similarity in a General Aggregation-Fragmentation Problem ; Application to Fitness Analysis}
\author{Vincent Calvez \thanks{Ecole Normale Sup\'erieure de Lyon, UMR CNRS 5669, 46, all\'ee d'Italie, F-69364 Lyon cedex 07, France. Email: vincent.calvez@umpa.ens-lyon.fr} \hspace{1cm}
Marie Doumic Jauffret\thanks{INRIA Rocquencourt, projet BANG, Domaine de Voluceau, BP 105, F-78153 Rocquencourt, France. Email: marie.doumic-jauffret@inria.fr} \hspace{1cm}
Pierre Gabriel \thanks{Universit\'e Pierre et Marie Curie-Paris 6, UMR 7598 LJLL, BC187, 4, place de Jussieu, F-75252 Paris cedex 5, France. Email: gabriel@ann.jussieu.fr}}
\date{\today}
\begin{document}
\maketitle
\pagestyle{plain}
\pagenumbering{arabic}

\begin{abstract}
We consider the linear growth and fragmentation equation $$\dis\dfrac{\partial}{\partial t} u(x,t) + \dfrac{\partial}{
\partial x} \big(\tau(x) u\big) + \beta(x) u = 2 \int_{x}^{\infty} \beta(y) \kappa (x,y) \, u(y,t) \, dy, $$ with general coefficients $\tau,$ $\beta$ and $\kappa.$ Under suitable conditions (see \cite{DG}), the first eigenvalue represents the asymptotic growth rate of solutions, also called 
the \emph{fitness} or \emph{Malthus coefficient} in population dynamics
. This value is of crucial importance 
in understanding the long-time behavior of the population. We investigate the dependence of the dominant eigenvalue and the corresponding eigenvector on the transport and fragmentation coefficients.
We show how it behaves asymptotically 
depending on whether transport dominates fragmentation or \emph{vice versa}.
For this purpose we perform 
a suitable blow-up analysis of the eigenvalue problem in the limit of 
a small/large growth coefficient (resp. fragmentation coefficient).
We exhibit 
a possible non-monotonic dependence on the parameters, 
in contrast to what would have been conjectured on the basis of some simple cases. 
\end{abstract}

\


\noindent{\bf Keywords:} structured populations, fragmentation-drift equation, cell division, long-time asymptotic, eigenproblem, self-similarity.

\

\noindent{\bf AMS Class. No.} 35B40, 35B45, 35P20, 35Q92, 45C05, 45K05, 45M05, 82D60, 92D25

\

\section*{Introduction}

The growth and division of a population of individuals  structured by a quantity 
 conserved in the division process may be described by the following growth and fragmentation equation: 
\beq\label{eq:temporel}\left \{ \begin{array}{l}
\dis\dfrac{\partial}{\partial t} u(t,x) + \dfrac{\partial}{
\partial x} \big(\tau(x) u(t,x)\big) + \beta(x) u(t,x) = 2 \int_{x}^{\infty} \beta(y) \kappa (x,y) \, u(t,y) \, dy, \qquad t \geqslant0, \quad x \geqslant0,
\\
\\
u(0,x)=u_0(x),
\\
\\
u(t,0)=0.
\end{array}\right.\eeq
This  equation is used in many different areas to model a wide range of phenomena
. The quantity $u(t,x)$ may represent a density of dust \cite{EscoMischler4}, polymers \cite{CL2,CL1}, bacteria or cells \cite{Bekkal1,Bekkal2}
. The structuring variable $x$ may be the size (\cite{BP} and references), the label \cite{Banks2,Banks}, a protein content \cite{Doumic, Magal}, a proliferating parasite content~\cite{Bansaye}; etc.
In the literature, it is referred to as the ``size-structured equation'', ``growth-fragmentation equation'', ``cell division equation'', ``fragmentation-drift equation'' or 
Sinko-Streifer model.

The growth speed $\tau=\f{dx}{dt}$ represents the natural growth of the variable $x,$ for instance by nutrient uptake or by polymerization, and the rate $\beta$ is called the fragmentation or division rate.
Notice that if $\tau$ is such that $\f1\tau$ is non integrable at $x=0,$ then the boundary condition $u(t,0)=0$ is useless.
The so-called fragmentation kernel $\kappa(x,y)$ represents the proportion of individuals of size $x\leq y$ born from a given dividing individual of size $y;$
more rigorously we should write $\kappa(dx,y),$ with $\kappa(dx,y)$ a probability measure with respect to $x$. 
For the sake of simplicity however, we 
retain the notation $\kappa(x,y) dx.$
The factor ``2'' in front of the integral term highlights the fact that we consider here binary fragmentation, namely that the fragmentation process breaks a single individual into two smaller ones. This physical interpretation leads 
us to impose the following relations 
\beq\label{as:kappa1&2}\int\kappa(x,y)dx = 1, \qquad\int x\kappa(x,y)dx  = \frac y 2,\eeq
so that $\kappa(x,y)dx$ is a probability measure and the total mass is conserved through the fragmentation process.
The method we use in this paper can be extended to more general cases where the mean number of fragments is $n_0>1$ (see \cite{DG}). 
The well-posedness of this problem as well as the existence of eigenelements has been proved in \cite{BanasiakLamb2,DG}.
Here we focus on the first eigenvalue $\lb$ associated to the eigenvector $\U$ defined by

\beq\label{eq:eigenelements}
\left \{ \begin{array}{l}
\displaystyle \f{\p}{\p x} (\tau(x) \U(x)) + ( \beta (x) + \lb) \U(x) = 2 \int_x^\infty \beta(y)\kappa(x,y) \U(y) dy, \qquad x \geqslant0,
\\
\\
\tau\U(x=0)=0 ,\qquad \U(x)>0 \; \text{ for } x>0, \qquad \int_0^\infty \U(x)dx =1.
\end{array} \right.
\eeq

The first eigenvalue $\lb$ is the asymptotic exponential growth rate of a solution to Problem \eqref{eq:temporel} (see \cite{MMP1,MMP2}).
It is often called the \emph{Malthus} parameter or 
the \emph{fitness} of the population.
Hence it is of great interest to know how it depends on the coefficients: for given parameters, is it favorable or unfavorable to increase fragmentation ?
Is it more efficient to modify the transport rate $\tau$ or to modify the fragmentation rate $\beta$ ?
Such concerns may have a deep impact on therapeutic strategy (see \cite{Bekkal1,Bekkal2,Lepoutre,Doumic}) or on the design of experimental protocols such as PMCA \footnote{\emph{PMCA}, Protein Misfolded Cyclic Amplification, is a protocol 
 designed 
 to amplify the quantity of prion protein aggregates 
 due to periodic sonication pulses. In this application, $u$ represents the density of protein aggregates and $x$ their size; the division rate $\beta$ is modulated by ultrasound waves. See Section \ref{ssec:appl:PMCA} for more details.} (see \cite{Lenuzza} and references therein).
Moreover, when modeling polymerization processes, Equation \eqref{eq:temporel} is coupled with the density of monomers $V(t),$
which appears as a multiplier for the polymerization rate (\emph{i.e.,} $\tau(x)$ is replaced by $V(t)\tau(x)$, and $V(t)$ is governed by one or more ODE - see for instance \cite{CL1,Greer,Lenuzza}).
The asymptotic study of such polymerization processes thus closely depends on such a dependence (see \cite{CL2,CL1}, where asymptotic results are obtained under the assumption of a monotonic dependence of $\lb$ with respect to the polymerization rate $\tau$).

Based on simple previously studied cases (see \cite{PG,Greer,Pruss}), 
it might be assumed intuitively that the fitness always increases when polymerization or fragmentation increases.
Nevertheless, a closer look 
reveals that 
this is not the case.

To study the dependence of the eigenproblem on its parameters, we fix coefficients $\tau$ and $\beta,$ and study how the problem is modified under the action of a multiplier of either  the growth or the fragmentation rate. We thus consider the two following problems: first,
\beq\label{eq:eigen:alpha:tau}
\left \{ \begin{array}{l}
\displaystyle \al\f{\p}{\p x} (\tau(x) \U_\al(x)) + (\beta (x) + \lb_\al) \U_\al(x) = 2 \int_x^\infty \beta(y)\kappa(x,y) \U_\al(y) dy, \qquad x \geqslant0,
\\
\\
\tau\U_\al(x=0)=0 ,\qquad \U_\al(x)>0 \; \text{ for } x>0, \qquad \int_0^\infty \U_\al(x)dx =1,
\end{array} \right.
\eeq
where $\alpha >0$ measures the strength of the polymerization (transport) term, as in the 
prion problem (see \cite{Greer}), and second
\beq\label{eq:eigen:alpha:beta}
\left \{ \begin{array}{l}
\displaystyle \f{\p}{\p x} (\tau(x) \UU_\af(x)) + ( \af \beta (x) + \Lb_\af) \UU_\af(x) = 2 \af \int_x^\infty \beta(y)\kappa(x,y) \UU_\af(y) dy, \qquad x \geqslant0,
\\
\\
\tau\UU_\af(x=0)=0 ,\qquad \UU_\af(x)>0 \; \text{ for } x>0, \qquad \int_0^\infty \UU_\af(x)dx =1,
\end{array} \right.
\eeq
where $\af >0$ modulates the fragmentation intensity, 
 as for PMCA or therapeutics applied to the cell division cycle (see the discussion in Section~\ref{sec:appl}).\\

To make things clearer, 
we give some 
insight into the dependence of $\Lambda_\af$ and $\lambda_\alpha$ on their respective multipliers $\af$ and $\alpha.$ First of all, one 
might suspect that if $\af$ vanishes or if $\al$ tends to infinity, since transport dominates, the respective eigenvectors $\U_\al$ and $\UU_\af$ tend to dilute, and on the other hand if $\af$ tends to infinity or if $\al$ vanishes, since fragmentation dominates, they tend to a Dirac mass at zero (see Figure~\ref{fig:eigenvectors} for an illustration). 
\begin{figure}[ht]
\begin{center}
\begin{minipage}{\textwidth}
\centering \includegraphics[width=.5\textwidth]
{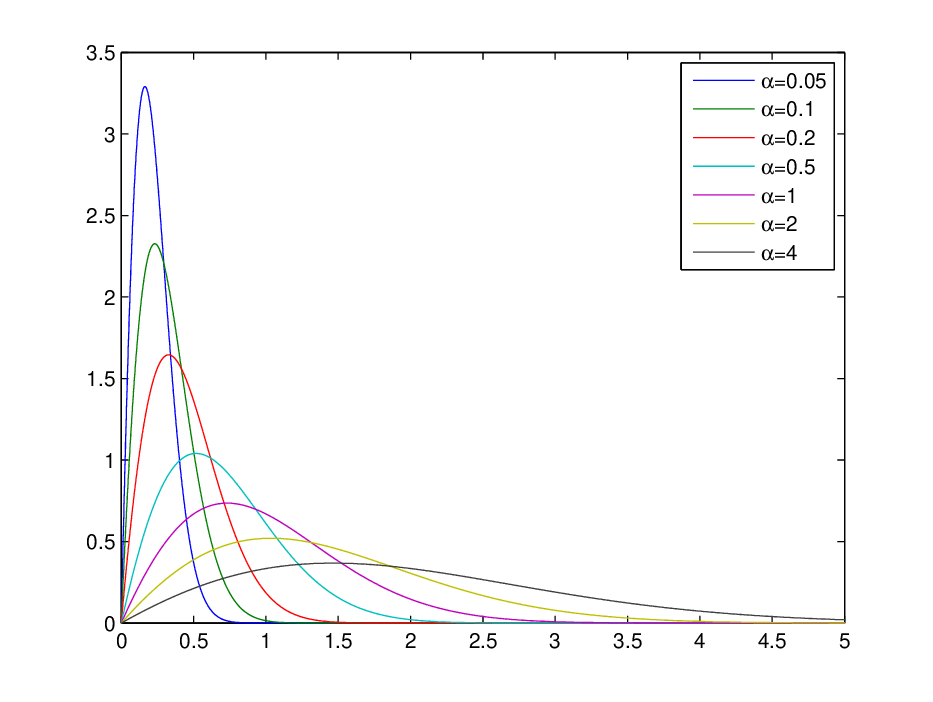}
\caption{Eigenvectors $\U_\al(x)$ for different values of $\al$ when $\beta(x)\equiv1,\ \kappa(x,y)=\f1y\1_{0\leq x\leq y}$ and $\tau(x)=x.$
In this case there is an explicit expression $\U_\al(x)=2\sqrt{\al}\left(\sqrt{\al}x+\f{\al x^2}{2}\right)\exp{\left(\sqrt{\al}x+\f{\al x^2}{2}\right)}.$ We can see that if $\alpha$ vanishes, $\U_\al$ tends to a Dirac mass, whereas it dilutes when $\alpha\to+\infty.$
}
\label{fig:eigenvectors}
\end{minipage}
\end{center}
\end{figure}
But what happens to the eigenvalues $\lb_\al$ and $\Lb_\af$ ?
Integrating Equation~\eqref{eq:eigen:alpha:beta}, we obtain the relation
$$\Lambda_\af=\af\int_0^\infty\beta(x)\UU_\af(x)\,dx$$
which gives the impression that $\Lambda_\af$ is an increasing function of $\af,$ which is true if $\beta(x)\equiv\beta$ is a constant since in this case we obtain  $\Lambda_\af=\beta\af.$
However, when $\beta$ is not a constant, the dependence of the distribution $\UU_\af(x)$ on $\af$ comes into account and we cannot conclude so easily. A better idea is given by integration of Equation~\eqref{eq:eigen:alpha:beta} against the weight $x.$ 
Together with Assumption~\eqref{as:kappa1&2}, this implies
$$\Lambda_\af\int x\UU_\af(x)\,dx=\int\tau(x)\UU_\af(x)\,dx$$
and consequently 
$$\inf_{x>0}\f{\tau(x)}{x}\leq\Lambda_\af\leq\sup_{x>0}\f{\tau(x)}{x}.$$
This last relation highlights the link between the first eigenvalue $\Lambda_\af$ and the growth rate $\tau(x),$ or more precisely $\frac{\tau(x)}{x}.$
For instance, if $\frac{\tau(x)}{x}$ is bounded, then $\Lambda_\af$ is also bounded, independently of $\af.$
Notice that in the constant case $\beta(x)\equiv\beta,$ there cannot exist a solution to the eigenvalue problem~\eqref{eq:eigenelements} for $\f{\tau(x)}{x}$ bounded since we have $\Lambda_\af=\beta\af$ which contradicts the boundedness of $\f{\tau(x)}{x}.$
In fact we check that, for $\beta$ constant, 
the existence condition~\eqref{as:betatau0} in Section~\ref{ssec:existence} imposes  that $\f{1}{\tau}$ is integrable at $x=0$ and so $\f{\tau(x)}{x}$ cannot be bounded.

Similarly, concerning Equation~\eqref{eq:eigen:alpha:tau}, an integration against the weight $x$ reads 
$$\lb_\al=\al\, \f{\int \tau \U_\al dx}{\int x\U_\al dx},$$ 
which could lead to the (false) idea that $\lb_\al$ increases with $\al,$ 
which is in fact true in the limiting case $\tau(x)=x.$ A simple integration gives more insight: this leads to
$$\lambda_\al=\int\beta(x)\U_\al(x)\,dx,\qquad \inf_{x>0}\beta(x)\leq\lambda_\al\leq\sup_{x>0}\beta(x).$$
This relation 
connects $\lambda_\alpha$ to the fragmentation rate $\beta$ when the parameter $\al$ is in front of the transport term. Moreover, 
we have seen that when the growth parameter $\al$ tends to zero, 
for instance, the distribution $\U_\al(x)$ is expected to concentrate into a Dirac mass in $x=0,$ so the identity $\lambda_\al=\int\beta(x)\U_\al(x)\,dx$  indicates that $\lambda_\al$ should tend to $\beta(0).$ Similarly, when $\al$ tends to infinity, $\lambda_\al$ should behave as $\beta(+\infty).$ 

\

These ideas on the link between $\f{\tau(x)}{x}$ and $\Lb_\af$ on the one hand, $\beta$  and $\lb_\al$ on the other hand, 
 are  expressed in a rigorous way 
below. The main assumption is  that the coefficients $\tau(x)$ and $\beta(x)$ have power-like behaviors in the neighborhood of $L=0$ or $L=+\infty,$ namely that
\beq\label{as:taunu:betagamma:L}
\exists\;\nu,\;\gamma\in\R\quad\text{such that}\quad \tau(x) \underset{x\rightarrow L}{\sim} \tau x^\nu,\quad\beta(x)\underset{x\rightarrow L}{\sim} \beta x^\gamma.
\eeq

\begin{theorem}\label{th:coself}
Under Assumption~\eqref{as:taunu:betagamma:L}, Assumptions~\eqref{as:kappa1&2}, \eqref{eq:def:kappaal}-\eqref{as:kappalim} on $\kappa,$ and Assumptions~\eqref{as:kappa3}-\eqref{as:betatauinf} stated in \cite{DG} to ensure the existence and uniqueness of solutions to the eigenproblems \eqref{eq:eigen:alpha:tau} and \eqref{eq:eigen:alpha:beta},
we have, for $L=0$ or $L=+\infty,$
$$\lim_{\al\to L}\lambda_\al=\lim_{x\to L}\beta(x)\quad \text{and}\quad \lim_{\af\to L}\Lb_\af=\lim_{x\to\f1L}\f{\tau(x)}{x}.$$
\end{theorem}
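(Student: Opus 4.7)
The proof rests on two elementary integral identities combined with a blow-up analysis of each eigenproblem. First, integrating \eqref{eq:eigen:alpha:tau} directly yields $\lambda_\al=\int_0^\infty\beta(x)\U_\al(x)\,dx$, and integrating \eqref{eq:eigen:alpha:beta} against the weight $x$ (using the binary-fragmentation mass balance $\int_0^y x\kappa(x,y)\,dx=y/2$) yields $\Lb_\af=\int\tau(x)\UU_\af(x)\,dx\,\big/\int x\UU_\af(x)\,dx$. The right-hand sides are averages of $\beta(x)$ against the probability density $\U_\al$, respectively of $\tau(x)/x$ against the probability density proportional to $x\UU_\af(x)$. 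Since \eqref{as:taunu:betagamma:L} gives $\beta(x)\to\beta(L)$ and $\tau(x)/x\to(\tau/x)(1/L)$ at the relevant endpoint, the theorem reduces to showing that these two probability measures concentrate at the correct endpoint of $[0,+\infty]$ in the respective limit.

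To establish the concentration/dilution I would perform a self-similar blow-up on each eigenproblem. Take for instance $\al\to 0$ in \eqref{eq:eigen:alpha:tau}: rescale $\tilde u_\al(y):=a_\al\U_\al(a_\al y)$ with $a_\al\to 0$ chosen to balance transport and fragmentation at order one in the new variable. Under the power-law assumption at $L=0$ this fixes $a_\al=\al^{1/(1+\gamma-\nu)}$ (the regime $\gamma>\nu-1$ being exactly the one in which fragmentation dominates near zero); the rescaled coefficients converge locally uniformly to the pure powers $\tau y^\nu$ and $\beta y^\gamma$, and assumptions \eqref{eq:def:kappaal}--\eqref{as:kappalim} provide a matching self-similar limit for $\kappa$. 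The rescaled eigenvector $\tilde u_\al$ remains a probability density; by a Prokhorov/Dunford--Pettis compactness argument one extracts a weak limit $\tilde u$ satisfying the pure power-law eigenproblem, which has a unique normalized solution thanks to \cite{DG}. Translating back to the original variable, $\U_\al\,dx$ concentrates at $x=0$ with total mass one, and passing to the limit in $\lambda_\al=\int\beta\U_\al\,dx$ yields the announced formula. The three remaining regimes ($\al\to+\infty$, $\af\to 0$, $\af\to+\infty$) follow by the symmetric rescaling, with the appropriate power of $\al$ or $\af$.

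The main obstacle is that $\beta$, or $\tau/x$, may be unbounded at the concentration point (when $\gamma<0$, respectively $\nu\neq 1$), so that passing to the limit in the integral identities requires quantitative tightness rather than mere weak convergence. This translates into uniform moment bounds of the form $\sup_\al\int y^\gamma\tilde u_\al(y)\,dy<\infty$ on the rescaled profile (and analogous moments in the three other cases), which I would derive by testing the rescaled eigenproblem against suitable powers of $y$ and exploiting the a priori bounds $\inf\beta\leq\lambda_\al\leq\sup\beta$ together with their analogues $\inf(\tau/x)\leq\Lb_\af\leq\sup(\tau/x)$. A secondary technical point is to check that the rescaled fragmentation kernels satisfy the hypotheses of \cite{DG} uniformly in $\al$ (resp.\ $\af$), so that the existence and uniqueness of the self-similar limit profile genuinely survive the passage to the limit.
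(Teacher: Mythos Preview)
Your approach is essentially the paper's: the same self-similar rescaling $v_\al(x)=\al^k\U_\al(\al^k x)$ with $k=1/(1+\gamma-\nu)$, the same compactness scheme (uniform moment estimates obtained by testing against powers, Dunford--Pettis, uniqueness of the limiting power-law eigenproblem), and the same technical checks that the rescaled coefficients and kernel satisfy the hypotheses of \cite{DG} uniformly. The paper's only streamlinings are to read off $\lambda_\al=\al^{k\gamma}\theta_\al$ directly from the convergence of the rescaled eigenvalue $\theta_\al\to\theta_\infty>0$ (which is your ``passing to the limit in $\int\beta\U_\al$'' unpacked in rescaled variables, and handles the cases $\lim\beta=0,+\infty$ uniformly), and to collapse the four regimes into one by observing that the rescaled $\al$-problem and $\af=1/\al$-problem coincide.
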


 This  is an immediate consequence of our main result, stated in Theorem~\ref{th:self} of Section~\ref{ssec:proof:self}.
As expected from the previous relations, for Problem \eqref{eq:eigen:alpha:tau} the eigenvalue behavior follows from a comparison between $\beta$ and $1$ in the neighborhood of zero if polymerization vanishes ($\al\to 0$),
and in the neighborhood of infinity if polymerization explodes ($\al\to\infty$).
For Problem \eqref{eq:eigen:alpha:beta}, it is given by a comparison between $\tau$ and $x$ (in the neighborhood of zero when $\af\to\infty$ or in the neighborhood of infinity when $\af\to0$).

It can be noticed that these behaviors are somewhat 
symmetrical:  
it is easy to see that
\begin{equation}
\Lambda_{\frac{1}{\alpha}} = \frac{\lambda_\alpha}{\alpha},\qquad \UU_{\frac{1}{\alpha}}=\U_\alpha.
\end{equation}
The first step of our proof is thus to use a properly-chosen rescaling, so that both problems \eqref{eq:eigen:alpha:tau} and \eqref{eq:eigen:alpha:beta} can be reduced to a single one, stated in Equation~\eqref{eq:dilateeP3}. 
Theorem~\ref{th:self} studies the asymptotic behavior of this new problem, 
which allows us to quantify precisely the rates of convergence of the eigenvectors toward self-similar profiles.

\

A consequence of these results is the possible non-monotonicity of the first eigenvalue as a function of $\al$ or $\af.$
In fact, if $\lim_{x\to0}\beta(x)=\lim_{x\to\infty}\beta(x)=0,$ then the function $\al\mapsto\lambda_\al$ satisfies $\lim_{\al\to0}\lambda_\al=\lim_{\al\to\infty}\lambda_\al=0$
and is positive on $(0,+\infty),$ 
because $\lambda_\al=\int\beta\U_\al>0$ for $\al>0.$
If $\lim_{x\to0}\f{\tau(x)}{x}=\lim_{x\to\infty}\f{\tau(x)}{x}=0,$ we have the same conclusion for $\af\mapsto\Lambda_\af$ (see Figure~\ref{fig:dependency} for examples).

\begin{figure}[htp]
\begin{center}
\subfigure[$\al\mapsto\lb_\al$]{\label{fig:poly}
\includegraphics[width=.48\textwidth]
{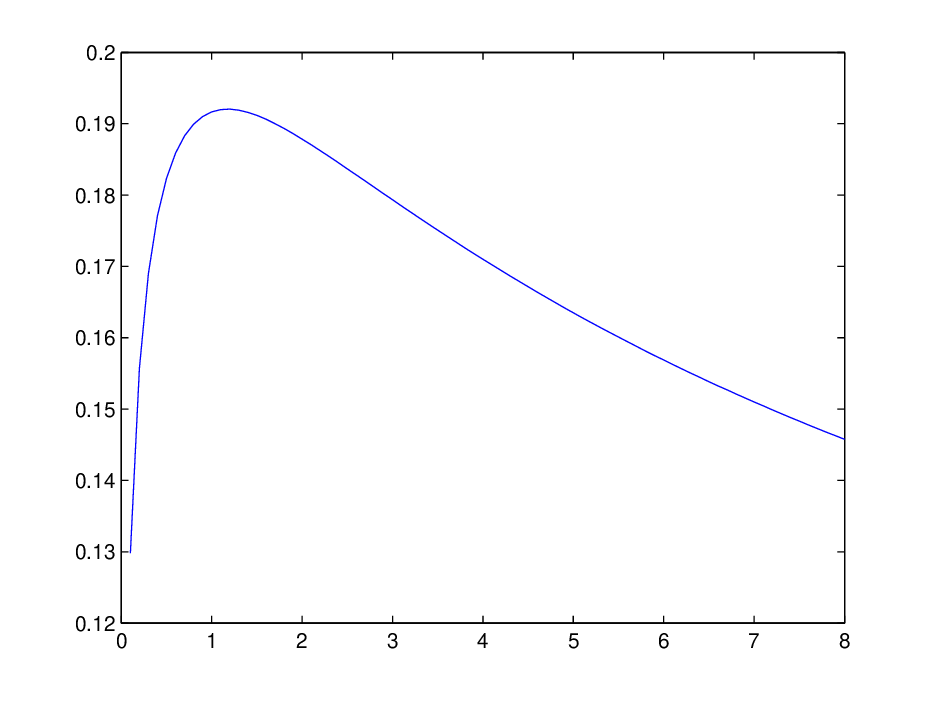}}
\subfigure[$\af\mapsto\Lb_\af$]{\label{fig:frag}
\centering \includegraphics[width=.48\textwidth]
{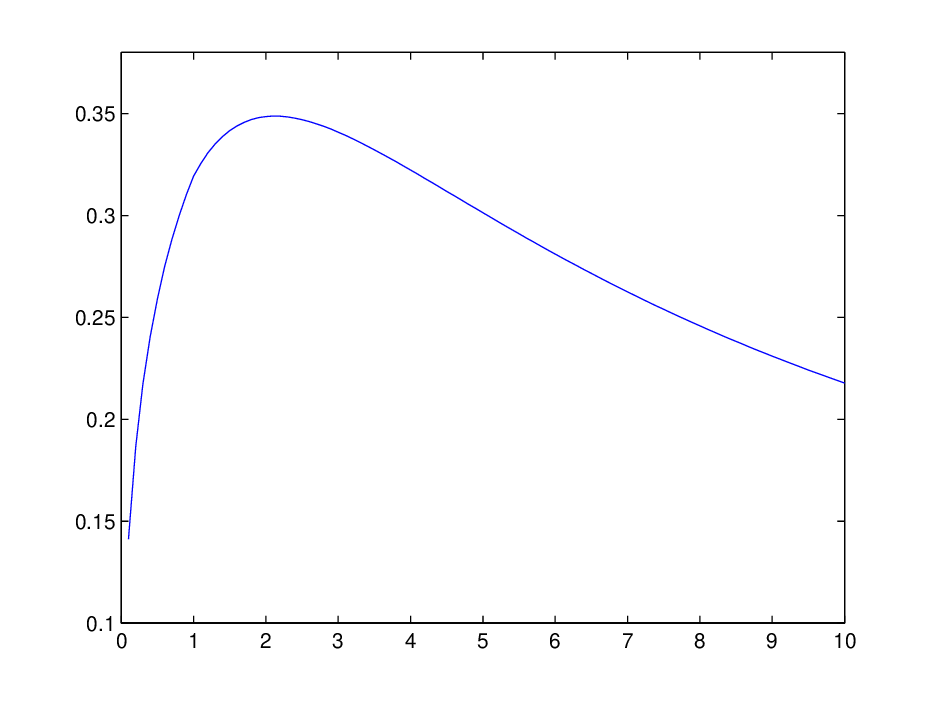}}
\end{center}
\caption{The dependences of the first eigenvalue on polymerization and fragmentation parameters for coefficients which satisfy the assumptions of Theorem~\ref{th:coself} are plotted.
The coefficients are chosen to obtain non monotonic functions.
\ref{fig:poly}: $\tau(x)=\f{8\,x^{0.2}}{1+2\,x^{4.2}},\ \beta(x)=\f{x^3}{15+x^{4.5}}$ and $\kappa(x,y)=\f1y\1_{0\leq x\leq y}.$
We have $\lim_{x\to0}\beta(x)=\lim_{x\to\infty}\beta(x)=0,$ so $\lim_{\al\to0}\lb_\al=\lim_{\al\to\infty}\lb_\al=0.$
\ref{fig:frag}: $\tau(x)=\f{1.2\,x^{1.8}}{1+2\,x^{2.8}},\ \beta(x)=\f{4\,x^2}{10+x^{0.8}}$ and $\kappa(x,y)=\f1y\1_{0\leq x\leq y}.$
We have $\lim_{x\to0}\f{\tau(x)}{x}=\lim_{x\to\infty}\f{\tau(x)}{x}=0,$ so $\lim_{\af\to0}\Lb_\af=\lim_{\af\to\infty}\Lb_\af=0.$
}
\label{fig:dependency}
\end{figure}

\

This article is organised as follows. 
In Section \ref{sec:ssresults} we state and prove the main result given in Theorem~\ref{th:self}. We first detail the self-similar change of variables that leads to the reformulation of Problems \eqref{eq:eigen:alpha:tau} and \eqref{eq:eigen:alpha:beta} in Problem \eqref{eq:dilateeP3}, as stated in Lemma \ref{lm:self}. We then recall the assumptions for the existence and uniqueness result of \cite{DG}. Here we need these assumptions  not only to have well-posed problems, 
 but also because the main tool to prove Theorem \ref{th:self} is given by estimates that are based on them. 
In Section~\ref{sec:furtherresults}, we give more precise results in the limiting cases, \emph{i.e.} when $\lim_{x\to L}\beta(x)$ or $\lim_{x\to L}\f{\tau(x)}{x}$ is finite and positive, and conversely more general results under assumptions weaker than Assumption~\eqref{as:taunu:betagamma:L}.
Finally, in Section \ref{sec:appl} we discuss how  the results might be used and interpreted in various fields of application.



\

\section{Self-similarity Result}\label{sec:ssresults}

\subsection{Self-similar transformation}\label{ssec:mainth}

The main theorem is a self-similar result, in the spirit of \cite{EscoMischler3}.
It 
considers the cases for  Equation \eqref{eq:eigen:alpha:tau} or \eqref{eq:eigen:alpha:beta}, 
and whether the parameter $\al$ or $\af$ goes to zero or to infinity.
It gathers the asymptotics of the eigenvalue and possible self-similar behaviors of the eigenvector, when $\tau$ and $\beta$ have power-like behavior in the neighborhood of $0$ or $+\infty.$ 
We first explain in detail 
how the study of both Equations \eqref{eq:eigen:alpha:tau} and \eqref{eq:eigen:alpha:beta} comes down to the study of the asymptotic behavior of a unique problem, as stated in Lemma~\ref{lm:self}. 

When fragmentation vanishes or polymerization tends to infinity, one expects the eigenvectors $\U_\al$ and $\UU_\af$ to disperse more and more.
When fragmentation tends to infinity or polymerization vanishes on the other hand, we expect them to accumulate towards zero.
This leads to the idea of performing an appropriate scaling of the eigenvector ($\U_\al$ or $\UU_\af$),
so that the rescaled problem converges toward a steady profile instead of a Dirac mass or an increasingly spread-out 
 distribution.

For given $k$ and $l,$ we define $v_\al$ or $w_\af$ by the dilation 
\beq\label{eq:defval}
v_\al(x)=\al^k\U_\al(\al^kx),\qquad w_\af(x)=\af^l\UU_\af(\af^l x).
\eeq 
The function $v_\al$ satisfies the following equation
\beq\label{eq:dilateeP}
\al^{1-k} \f{\p}{\p x} \left(\tau(\al^{k}x) v_\al(x)\right)+\left(\lb_{\al}+\beta(\al^{k}x)\right) v_\al(x)= 2\int\limits_x^\infty \beta(\al^{k}y)\kappa(\al^kx,\al^ky)v_\al(y) dy,
\eeq
and similarly the function $w_\af$ satisfies
\beq
\label{eq:dilateeF}
\af^{-l} \f{\p}{\p x} \left(\tau(\af^{l}x) w_\af(x)\right) +\left(\Lb_{\af}+\af\beta(\af^{l}x)\right) w_\af(x)=  2\af\int\limits_x^\infty \beta(\af^{l}y)\kappa(\af^l x,\af^l y)w_\af(y) dy.
\eeq
A vanishing fragmentation or an increasing polymerization will lead the mass to spread more and more, and thus lead us to consider the behavior of the coefficients $\tau,\;\beta$ around infinity. On the other hand, a vanishing polymerization or an infinite fragmentation will lead the mass to concentrate near zero, and we then consider the behavior of the coefficients around zero.
This consideration drives our main assumption~\eqref{as:taunu:betagamma:L} on the power-like behavior of the coefficients $\tau(x)$ and $\beta(x)$ in the neighborhood of $L=0$ and $L=+\infty.$
We recall this assumption here
\begin{equation*}
\exists\;\nu_L,\;\gamma_L\in\R\quad\text{such that}\quad \tau(x) \underset{x\rightarrow L}{\sim} \tau x^{\nu_L},\quad\beta(x)\underset{x\rightarrow L}{\sim} \beta x^{\gamma_L}.
\end{equation*}
In the class of coefficients satisfying Assumption~\eqref{as:taunu:betagamma:L},
Assumption~\eqref{as:betatau0} of Section~\ref{ssec:existence} is equivalent to
\beq\label{as:gammanu0}\gamma_0+1-\nu_0>0,\eeq
Assumption~\eqref{as:betatauinf} coincides with
\beq\label{as:gammanuinf}\gamma_\infty+1-\nu_\infty>0,\eeq
and in Assumption~\eqref{as:kappatau}, the condition linking $\bar\gamma$ to $\tau(x)$ becomes
\beq\label{as:gammabarnu0}\bar\gamma+1-\nu_0>0.\eeq
For the sake of simplicity, in what follows we omit the indices $_L.$ 

To preserve the fact that $\kappa$ is a probability measure, we define
\beq\label{eq:def:kappaal}\kappa_\al(x,y):=\al^k\kappa(\al^kx,\al^ky),\qquad\kappa_\af(x,y):=\af^l\kappa(\af^l x,\af^l y).\eeq
For our equations to converge, we also make the following assumption concerning the fragmentation kernels $\kappa_{\al}$ and $\kappa_{\af}:$
\beq\label{as:kappalim}\text{For }k>0,\quad \exists\; \kappa_L\quad s.t.\quad\forall\varphi\in{\mathcal C}_c^\infty(\R_+),\quad\int\varphi(x)\kappa_\al(x,y)\,dx\xrightarrow[\al\to L]{}\int\varphi(x)\kappa_L(x,y)\,dx\quad a.e.\eeq
This assumption is the convergence in a distribution sense of $\kappa_\al(.,y)$ for almost every $y.$ This is true for instance for fragmentation kernels which can be written in 
homogeneous form as $\kappa(x,y)=\f1y\tilde\kappa\bigl(\f xy\bigr).$ In this case $\kappa_\al$ is equal to $\kappa$ for all $\al,$ so $\kappa_L\equiv\kappa.$ 

Under Assumption \eqref{as:taunu:betagamma:L}, in order to obtain steady profiles, we define  
\beq\label{eq:def:taual}\tau_\al(x):=\al^{-k\nu}\tau(\al^kx),\qquad\tau_\af(x):=\af^{-l\nu}\tau(\af^l x),\qquad \beta_\al:=\al^{-k\gamma}\beta(\al^kx),\qquad \beta_\af:=\af^{-l\gamma}\beta(\af^l x).\eeq 
If $k>0,$ it provides local uniform convergences on $\R_+^*$ of $\tau_\alpha$ and $\beta_\alpha:$ $\tau_\al\underset{\al\to L}{\longrightarrow}\tau x^\nu$ and $\beta_\al\underset{\al\to L}{\longrightarrow}\beta x^\gamma.$ Equations \eqref{eq:dilateeP} and \eqref{eq:dilateeF} divided respectively by $\al^{k\gamma}$ and $\af^{l(\nu-1)}$ 
can be written as 
\beq\label{eq:dilateeP2}\al^{1+k(\nu-1-\gamma)}\f{\p}{\p x} \Bigl(\tau_\al v_\al(x)\Bigr) +\left(\al^{-k\gamma}\lb_{\al} + \beta_\al\right) v_\al(x) = 2\int\limits_x^\infty \beta_\al(y)\kappa_\al(x,y)v_\al(y) dy,\eeq
\beq\label{eq:dilateeF2}\f{\p}{\p x} \Bigl(\tau_\af w_\af(x)\Bigr) +\left(\af^{l(1-\nu)}\Lb_\af + \af^{1-l(\nu-1-\gamma)}\beta_\af\right) w_\af(x) = 2\af^{1-l(\nu-1-\gamma)}\int\limits_x^\infty \beta_\af(y)\kappa_\af(x,y)w_\af(y) dy.\eeq

In order to cancel the multipliers of $\tau_\al$ and $\beta_\af,$ it is natural to define
\beq\label{eq:def:k}
k=\f{1}{1+\gamma-\nu} >0,\qquad l=-k=\f{-1}{1+\gamma-\nu} <0,
\eeq 
which leads to
\beq\label{eq:dilateeP3}\f{\p}{\p x} (\tau_\al(x) v_\al(x)) +(\theta_\al + \beta_\al(x)) v_\al(x) = 2\int\limits_x^\infty \beta_\al(y)\kappa_\al(x,y)v_\al(y) dy,\eeq
and 
\beq\label{eq:dilateeF3}\f{\p}{\p x} (\tau_\af(x) w_\af(x)) +(\Theta_\af + \beta_\af(x)) w_\af(x) = 2\int\limits_x^\infty \beta_\af(y)\kappa_\af(x,y)w_\af(y) dy,\eeq
with
\begin{equation*}
\theta_\al=\al^{\f{-\gamma}{1+\gamma-\nu}}\lb_{\al},\qquad \Theta_\af=\af^{\f{\nu-1}{1+\gamma-\nu}}\Lb_\af.
\end{equation*}
The signs of $k$ and $l$ express the fact that for $\al>1$ or $\af<1,$ $v_\al$ and $w_\af$ are contractions of $U_{\al,\af},$ whereas for $\al<1$ or $\af>1$ they are dilations. 
It is in accordance with our initial idea of the respective roles of polymerization and fragmentation. 
Moreover, one notices that if we define $\af:=\f{1}{\al},$ then $\af^l = \al^k,$ and so Equations \eqref{eq:dilateeP3} and \eqref{eq:dilateeF3} are identical.
By the uniqueness of a solution to this eigenvalue problem, 
this implies that $\theta_\al=\Theta_{\f{1}{\al}}$ and $v_\al=w_{\f{1}{\al}}.$ We are ready to state this result in the following lemma. 
\begin{lemma}
 \label{lm:self}
Eigenproblems \eqref{eq:eigen:alpha:tau} and \eqref{eq:eigen:alpha:beta} are equivalent to the eigenproblem \eqref{eq:dilateeP3} with $k$  defined by \eqref{eq:def:k},
$\beta_\al,$ $\tau_\al$ defined by \eqref{eq:def:taual}, $\kappa_\al$ defined by \eqref{eq:def:kappaal}, $\af=\f{1}{\al}$ and the following relations linking the different problems:
\beq
\label{def:self:equiv}
v_\al (x) = \al^k \U_\al (\al^kx)=\al^k \UU_{\f{1}{\al}} (\al^kx),\qquad \theta_\al=\al^{\f{-\gamma}{1+\gamma-\nu}}\lb_{\al}=\al^{\f{1-\nu}{1+\gamma-\nu}}\Lb_{\f{1}{\al}}.
\eeq
 \end{lemma}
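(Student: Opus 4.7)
The proof is essentially a direct verification: the computation has been sketched in the paragraphs preceding the lemma, and the main task is to organize it as a chain of equivalences culminating in an appeal to uniqueness of the principal eigenpair. First I would start from \eqref{eq:eigen:alpha:tau} for $\U_\al$ and plug in the dilation $v_\al(x)=\al^k\U_\al(\al^k x)$. Using the chain rule $\frac{\p}{\p x}\bigl(\tau(\al^k x)v_\al(x)\bigr)=\al^{2k}\bigl[\frac{\p}{\p y}(\tau(y)\U_\al(y))\bigr]_{y=\al^k x}$ and performing the change of variable $y\mapsto \al^k y$ in the fragmentation integral (using $\kappa(\al^k x,\al^k y)$), one recovers \eqref{eq:dilateeP}. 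The same manipulation applied to \eqref{eq:eigen:alpha:beta} with $w_\af(x)=\af^l\UU_\af(\af^l x)$ gives \eqref{eq:dilateeF}. The normalization $\int v_\al\,dx=1$ and the boundary condition $\tau v_\al(0)=0$ are preserved under the dilation, as is positivity.

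Next I would insert the definitions \eqref{eq:def:taual} of $\tau_\al,\beta_\al$ and \eqref{eq:def:kappaal} of $\kappa_\al$, then divide \eqref{eq:dilateeP} by $\al^{k\gamma}$ and \eqref{eq:dilateeF} by $\af^{l(\nu-1)}$ to obtain \eqref{eq:dilateeP2}--\eqref{eq:dilateeF2}. The choice of $k$ and $l$ in \eqref{eq:def:k} is then forced by requiring the remaining powers of $\al$ (resp.\ $\af$) in front of $\tau_\al$ and the fragmentation operator to equal $1$, i.e.\ $1+k(\nu-1-\gamma)=0$ and $1-l(\nu-1-\gamma)=0$; this yields $k=\frac{1}{1+\gamma-\nu}$ and $l=-k$, and produces \eqref{eq:dilateeP3} and \eqref{eq:dilateeF3} with the stated definitions of $\theta_\al$ and $\Theta_\af$.

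Finally, with the substitution $\af=\f1\al$ one has $\af^l=\al^{-l}=\al^k$, so the rescaled coefficients coincide: $\tau_\af=\tau_\al$, $\beta_\af=\beta_\al$, $\kappa_\af=\kappa_\al$. Thus $(w_\af,\Theta_\af)$ and $(v_\al,\theta_\al)$ both solve the same eigenproblem \eqref{eq:dilateeP3} with the same normalization; by uniqueness of the first positive eigenpair (guaranteed by the existence and uniqueness theorem of \cite{DG}, whose assumptions are invoked as hypotheses), they must coincide. Unwinding the definitions of $\theta_\al$ and $\Theta_\af$ produces the scalar relation in \eqref{def:self:equiv}.

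The only delicate point is bookkeeping the exponents consistently and verifying that the existence and uniqueness hypotheses \eqref{as:kappa1&2}--\eqref{as:betatauinf} from \cite{DG} are invariant under the dilation, so that the rescaled problem \eqref{eq:dilateeP3} is itself well-posed and admits a unique positive normalized eigenvector; once this is noted, the identification $v_\al=w_{1/\al}$ and $\theta_\al=\Theta_{1/\al}$ is immediate, and the lemma follows.
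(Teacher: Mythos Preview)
Your proposal is correct and follows essentially the same approach as the paper: the lemma is stated after the derivation \eqref{eq:dilateeP}--\eqref{eq:dilateeF3} has already been carried out in the text, and the identification $v_\al=w_{1/\al}$, $\theta_\al=\Theta_{1/\al}$ is obtained exactly as you describe, by observing that $\af^l=\al^k$ when $\af=1/\al$ and invoking uniqueness of the eigenpair. Your additional remark about checking that the hypotheses of \cite{DG} transfer to the rescaled problem is a reasonable caution, though strictly speaking well-posedness and uniqueness for \eqref{eq:dilateeP3} follow automatically from those of \eqref{eq:eigen:alpha:tau} and \eqref{eq:eigen:alpha:beta} since the dilation is a bijection.
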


Defining $(v_\infty,\theta_\infty)$ as the unique solution of the following problem
 \beq\label{eq:eigen:nu:gamma}
\left \{ \begin{array}{l}
\displaystyle \f{\p}{\p x} (\tau x^\nu v_\infty(x)) + (\beta x^\gamma + \theta_\infty) v_\infty(x) = 2 \int_x^\infty \beta y^\gamma \kappa_L(x,y) v_\infty(y) dy, \qquad x \geqslant0,
\\
\\
\tau v_\infty(x=0)=0 ,\qquad v_\infty(x)>0 \; \text{ for } x>0, \qquad \int_0^\infty v_\infty(x)dx =1,\qquad \theta_\infty >0,
\end{array} \right.
\eeq
we expect $\theta_\al$ to converge towards $\theta_\infty >0$ and $v_\alpha$ towards $v_\infty$ when $\alpha$ tends to $L,$ so the expressions of $\lb_\al$, $\Lb_\af$ given by \eqref{def:self:equiv} will immediately provide their asymptotic behavior.
This result is expressed in Theorem \ref{th:self}. 
%
%
%

\subsection{Recall of existence results (\cite{DG,M1})}\label{ssec:existence}

The assumptions of the existence and uniqueness theorem for the eigenequation \eqref{eq:eigenelements} (see \cite{DG} for a complete motivation of these assumptions) 
also ensure the well-posedness of Problems \eqref{eq:eigen:alpha:tau} and \eqref{eq:eigen:alpha:beta}. 
For all $y\geq0,\ \kappa(\cdot,y)$ is a non-negative measure with a support included in $[0,y].$ We define $\kappa$ on $(\R_+)^2$ as follows: $\kappa(x,y)=0\ \text{for}\ x>y.$
We assume that for all continuous functions $\psi,$ the application $f_\psi:y\mapsto\int\psi(x)\kappa(x,y)dx$ is Lebesgue measurable.\\ 
For physical reasons, we have stated Assumption \eqref{as:kappa1&2}, so that $f_\psi\in L^\infty_{loc}(\R_+).$ Moreover we assume that the second moment of $\kappa$ is uniformly less than the first one
\beq\label{as:kappa3}\int\f{x^2}{y^2} \, \kappa(x,y) dx \leq c < 1/2.\eeq
For the polymerization and fragmentation rates $\tau$ and $\beta,$ we introduce the set
$${\mathcal P}:=\bigl\{f\geq0\,:\,\exists\mu\geq0,\ \limsup_{x\to\infty}x^{-\mu}f(x)<\infty\ \text{and}\ \liminf_{x\to\infty}x^\mu f(x)>0\bigr\}.$$
We consider
\beq\label{as:betatauspace}\beta\in L^1_{loc}(\R_+^*)\cap{\mathcal P},\qquad
\exists\, r_0\geq0\ s.t.\ \tau\in L^\infty_{loc}(\R_{+},x^{r_0}dx)\cap{\mathcal P}\eeq
satisfying
\beq\label{as:positivity}\forall K\ \text{compact of}\ (0,\infty),\ \exists\, m_K>0\quad s.t.\quad\tau(x),\beta(x)\geq m_K\ \text{for}\ a.e.\ x\in K,\eeq
\beq\label{as:kappatau}\exists\, C>0,\bar\gamma\geq0\quad s.t.\qquad\int_0^x\kappa(z,y)\,dz\leq \min\Bigl(1,C\Bigl(\f x y\Bigr)^{\bar\gamma}\Bigr)\qquad\text{and}\qquad\f{x^{\bar\gamma}}{\tau(x)}\in L^1_0.\eeq
Notice that if Assumption~\eqref{as:kappatau} is satisfied for $\bar\gamma>0,$ then Assumption~\eqref{as:kappa3} is automatically fulfilled (see Appendix A in \cite{DG}).
We assume that growth dominates fragmentation close to $L = 0$ in the following sense:
\beq\label{as:betatau0}\f{\beta}{\tau}\in L^1_0:=\bigr\{f,\ \exists a>0,\ f\in L^1(0,a)\bigl\}.\eeq
We assume that fragmentation dominates growth close to $L = +\infty$ in the following sense:
\beq\label{as:betatauinf}\lim_{x\rightarrow +\infty}\f{x\beta(x)}{\tau(x)}=+\infty.\eeq
Under these assumptions, we have the existence and uniqueness of a solution to the first eigenvalue problem 
 (see \cite{DG,M1}).

\begin{theoremDG}
Under Assumptions \eqref{as:kappa1&2}, \eqref{as:kappa3}-\eqref{as:betatauinf}, for $0<\al,\;\af <\infty,$
there exists a unique solution $(\lb,\U),$ respectively, 
to the eigenproblems \eqref{eq:eigenelements}, \eqref{eq:eigen:alpha:tau} and \eqref{eq:eigen:alpha:beta}, and we have
\begin{equation*}\begin{array}{c}
\lb>0,
\vspace{2mm}\\
x^r\tau\U\in L^p(\R_+),\quad\forall r\geq-\bar\gamma,\quad\forall p\in[1,\infty],
\vspace{2mm}\\
x^r\tau\U\in W^{1,1}(\R_+),\quad\forall r\geq0.
\end{array}\end{equation*}

\end{theoremDG}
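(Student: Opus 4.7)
The plan is to prove existence, uniqueness, positivity of $\lb$, and the integrability/regularity bounds by a truncation-plus-Krein--Rutman scheme, followed by uniform a priori estimates that allow passage to the limit. By the equivalences recorded in Lemma~\ref{lm:self} it is enough to treat \eqref{eq:eigenelements}, since \eqref{eq:eigen:alpha:tau} and \eqref{eq:eigen:alpha:beta} are recovered by absorbing $\al$ or $\af$ into $\tau$ or $\beta$.

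\textbf{Regularization and Krein--Rutman.} First I would localize the problem on $[0,R]$, replace $\beta$ and $\tau$ by approximations $\beta_\varepsilon,\tau_\varepsilon$ that are bounded away from $0$ and $\infty$, and impose a Dirichlet condition $\U(R)=0$. For $\mu$ large enough, the resolvent $T_\mu : f \mapsto \U$ defined by
\[
\f{\p}{\p x}(\tau_\varepsilon \U) + (\beta_\varepsilon + \mu)\U = 2\int_x^R \beta_\varepsilon(y)\kappa(x,y)\U(y)\,dy + f(x),\qquad (\tau_\varepsilon\U)(0)=\U(R)=0,
\]
is well defined, positive and compact on $L^1(0,R)$ (the compactness uses the transport structure together with the strict mass contraction \eqref{as:kappa3}). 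The Krein--Rutman theorem then produces a simple positive eigenpair $(\lb_{\varepsilon,R},\U_{\varepsilon,R})$ normalized by $\int\U_{\varepsilon,R}=1$ with $\lb_{\varepsilon,R}>0$.

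\textbf{A priori estimates.} Integrating the truncated equation against $1$ gives $\lb_{\varepsilon,R}=\int\beta_\varepsilon\U_{\varepsilon,R}$, and against $x$ gives $\lb_{\varepsilon,R}\int x\U_{\varepsilon,R}=\int\tau_\varepsilon\U_{\varepsilon,R}$; combined with \eqref{as:betatauinf}, these trap $\lb_{\varepsilon,R}$ in a compact subset of $(0,\infty)$ uniformly in $\varepsilon,R$. Testing against $x^r$ for $r\geq 1$ and using the kernel bound $\int_0^y (x/y)^r\kappa(dx,y)\leq 1/2$ derived from \eqref{as:kappa3} closes a moment recursion that controls $\int x^r\U_{\varepsilon,R}$ for every $r\geq 0$; negative-order moments up to $\int x^{-\gamma_0}\U_{\varepsilon,R}$ are obtained from \eqref{as:kappatau} and \eqref{as:betatau0} by testing against $x^{-\gamma_0}$ and exploiting the local integrability of $\beta/\tau$ at the origin. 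These moment bounds convert into pointwise bounds on $x^r\tau\U$ via the Duhamel representation
\[
\tau(x)\U(x) = 2\int_0^x \exp\Bigl(-\int_z^x \f{\beta(s)+\lb}{\tau(s)}\,ds\Bigr)\int_z^\infty \beta(y)\kappa(z,y)\U(y)\,dy\,dz,
\]
and interpolation then delivers the full $L^p$ range claimed.

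\textbf{Passing to the limit, regularity, and the main obstacle.} The uniform estimates make $\{\U_{\varepsilon,R}\}$ weakly relatively compact in $L^1(\R_+)$, so a diagonal extraction produces a limit $(\lb,\U)$ solving \eqref{eq:eigenelements}; the boundary condition $\tau\U(0)=0$ is inherited from the Duhamel formula, and strict positivity $\U>0$ on $(0,\infty)$ follows from the contamination property encoded in \eqref{as:positivity}. The regularity $x^r\tau\U\in W^{1,1}$ is read directly from the equation, since both the fragmentation term and the absorption term belong to $L^1(x^r\,dx)$ by the moment bounds. The main obstacle is uniqueness: I would construct the dual eigenvector $\phi>0$ by applying the same truncation-plus-Krein--Rutman machinery to the adjoint problem
\[
-\tau(x)\phi'(x) + (\beta(x)+\lb)\phi(x) = 2\beta(x)\int_0^x \kappa(z,x)\phi(z)\,dz,
\]
and then test $\phi$ against the difference of two candidate normalized solutions to force the eigenvalues to coincide and the eigenvectors to be proportional. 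The delicacy is that $\tau$ and $\beta$ are only locally bounded and degenerate at $0$ and $\infty$, so the strong positivity of $\phi$ and the spectral simplicity of $\lb$ require Harnack-type arguments adapted to the transport-fragmentation structure, in which \eqref{as:positivity}, \eqref{as:kappa3}, and the integrability conditions \eqref{as:betatau0}--\eqref{as:betatauinf} must combine at the endpoints.
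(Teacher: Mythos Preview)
The paper does not prove this statement. It is explicitly labelled ``Theorem \cite{DG}'' and is introduced with the words ``let us recall this result (see \cite{M1, DG})''; the corollary immediately after it is likewise attributed to \cite{M1,DG}. In the present paper this theorem is an imported black box, used only to guarantee that the eigenproblems \eqref{eq:eigenelements}, \eqref{eq:eigen:alpha:tau}, \eqref{eq:eigen:alpha:beta}, \eqref{eq:dilateeP3} and \eqref{eq:eigen:nu:gamma} are well posed so that the self-similar analysis of Section~\ref{sec:ssresults} can proceed. There is therefore no proof in this paper to compare your proposal against.

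Your sketch is a plausible outline of the truncation--plus--Krein--Rutman strategy actually carried out in \cite{DG}, and several of your a~priori estimates (moment bounds from testing against $x^r$, the $L^\infty$ bound on $\tau\U$, the $W^{1,1}$ bound read off the equation) are precisely the ones reproduced later in Section~\ref{ssec:proof:self} of this paper---but there they are redone \emph{uniformly in $\al$} for the rescaled family $(v_\al,\theta_\al)$, not to establish existence. One small slip: your appeal to Lemma~\ref{lm:self} to reduce \eqref{eq:eigen:alpha:tau} and \eqref{eq:eigen:alpha:beta} to \eqref{eq:eigenelements} is misplaced. That lemma concerns the self-similar rescaling to \eqref{eq:dilateeP3}; the reduction you want is the trivial observation that \eqref{eq:eigen:alpha:tau} is \eqref{eq:eigenelements} with $\tau$ replaced by $\al\tau$, and \eqref{eq:eigen:alpha:beta} is \eqref{eq:eigenelements} with $\beta$ replaced by $\af\beta$, both of which preserve Assumptions \eqref{as:kappa1&2}--\eqref{as:betatauinf}.
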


We also recall the following corollary (first proved in \cite{M1}). We shall use it at some step of our blow-up analysis.

\begin{corollaryMDG}
Let $\tau >0,$ 
$\beta >0,$ 
$\gamma,\;\nu\in \R$ such that $1+\gamma-\nu >0,$ and $\kappa_L$ satisfy 
 Assumptions \eqref{as:kappa1&2}, \eqref{as:kappa3} and \eqref{as:kappatau}. Then there exists a unique $(\theta_\infty,v_\infty)$ solution to the eigenproblem \eqref{eq:eigenelements} with $\tau(x)=\tau x^\nu$ and $\beta(x)=\beta x^\gamma.$
\end{corollaryMDG}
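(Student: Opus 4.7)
The strategy is to invoke the DG existence and uniqueness theorem recalled just above, applied to the specific coefficients $\tau(x)=\tau x^\nu$ and $\beta(x)=\beta x^\gamma$ and the kernel $\kappa_L$. Since assumptions \eqref{as:kappa1&2}, \eqref{as:kappa3}, and \eqref{as:kappatau} on $\kappa_L$ are imposed by hypothesis, nothing needs to be checked on the kernel side; the whole task reduces to verifying \eqref{as:betatauspace}--\eqref{as:betatauinf} for the chosen power laws.

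The routine conditions are immediate. Strict positivity on compact subsets of $(0,\infty)$, i.e.\ \eqref{as:positivity}, is clear since $\tau,\beta>0$ and $\nu,\gamma\in\R$. Both power laws belong to $\mathcal P$ (take $\mu=|\nu|$ and $\mu=|\gamma|$ respectively); $\beta x^\gamma$ is locally integrable on $\R_+^*$ for every $\gamma\in\R$; and $\tau x^\nu$ lies in $L^\infty_{loc}(\R_+,x^{r_0}dx)$ as soon as $r_0\geq\max(0,-\nu)$. This disposes of \eqref{as:betatauspace}.

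The two substantive assumptions both reduce to the standing inequality $1+\gamma-\nu>0$. Near zero, $\beta(x)/\tau(x)=(\beta/\tau)\,x^{\gamma-\nu}$ lies in $L^1_0$ precisely when $\gamma-\nu>-1$, which is \eqref{as:betatau0}; at infinity, $x\beta(x)/\tau(x)=(\beta/\tau)\,x^{1+\gamma-\nu}\to+\infty$ under the same inequality, giving \eqref{as:betatauinf}. With every hypothesis verified, the DG theorem produces the unique pair $(\theta_\infty,v_\infty)$ satisfying \eqref{eq:eigen:nu:gamma} together with $\theta_\infty>0$.

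The only point that is not purely mechanical is the strict positivity of $\theta_\infty$, which is already part of the DG conclusion; for these homogeneous coefficients it can also be recovered a posteriori by integrating the eigenequation over $\R_+$ and using $v_\infty>0$ on $(0,\infty)$, producing $\theta_\infty=\beta\int_0^\infty x^\gamma v_\infty(x)\,dx>0$. I therefore expect no genuine obstacle: this corollary is essentially a restatement of the general DG theorem in the scale-invariant case, and the whole proof is the three-line verification of \eqref{as:betatau0} and \eqref{as:betatauinf} from $1+\gamma-\nu>0$.
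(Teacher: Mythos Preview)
Your proposal is correct and matches the paper's own treatment: the paper simply remarks that the hypotheses of the DG existence theorem are immediate for the power-law coefficients and that both \eqref{as:betatau0} and \eqref{as:betatauinf} are equivalent to $1+\gamma-\nu>0$. You have just written this out in slightly more detail; there is nothing further to add.
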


In this particular case, assumptions of the above existence theorem are immediate,
and as already said both assumptions \eqref{as:betatau0} and \eqref{as:betatauinf} are satisfied if and only if $1+\gamma-\nu >0.$

\subsection{Main result}\label{ssec:proof:self}

Based on the previous sections, we can now state our main result.

\begin{theorem} \label{th:self} Let $\tau,$ $\beta$ and $\kappa$ satisfy 
 Assumptions ~\eqref{as:kappa1&2}, \eqref{as:kappa3}-\eqref{as:betatauinf}.
Let $L=0$ or $L=+\infty,$ and $\tau$ and $\beta$ satisfy 
also Assumption \eqref{as:taunu:betagamma:L}.
Let $\kappa_{\al}$ defined by \eqref{eq:def:kappaal} with $k$ defined by \eqref{eq:def:k} satisfy 
Assumption~\eqref{as:kappalim}.
Let $(v_{\al},\theta_\al)$ be the unique solution to the eigenproblem \eqref{eq:dilateeF3}.
We have the following asymptotic behaviors
$$x^rv_\al(x)\xrightarrow[\al\to L]{}x^rv_\infty(x)\ \text{strongly in}\ L^1\ \text{for all}\ r\geq0,\ \text{and}\ \theta_\al\xrightarrow[\al\to L]{}\theta_\infty.$$
\end{theorem}

Theorem~\ref{th:coself} stated in 
the introduction immediately  follows from Theorem \ref{th:self} and the expression of $\lb_\al$ and $\Lb_\af$ given by \eqref{def:self:equiv}.

\begin{proof}

It is straightforward to prove that $\kappa_\al$ satisfies Assumptions~\eqref{as:kappa1&2},~\eqref{as:kappa3} and \eqref{as:kappatau} with the same constants $c$ and $C$  as $\kappa$, thus independent of $\al.$
We have local uniform convergences in $\R_+^*$ $\tau_\al\underset{\al\to L}{\longrightarrow}\tau x^\nu$ and $\beta_\al\underset{\al\to L}{\longrightarrow}\beta x^\gamma,$ if Assumption \eqref{as:taunu:betagamma:L} holds, for $L=0$ as well as $L=\infty.$

The proof is based on uniform estimates on $v_\al$ and $\theta_\al$ independent of $\al,$ in the same spirit as in the proof of the existence theorem (see \cite{DG}). 
Once they are sufficient to bring compactness in $L^1(\R_+),$ we shall extract a converging subsequence, which will be a weak solution of Equation \eqref{eq:eigen:nu:gamma}~;  the global convergence result will then be a consequence of the uniqueness of a solution to Equation \eqref{eq:eigen:nu:gamma}.

The first step is to ensure that $\beta_\alpha,$ $\tau_\alpha$ and $\kappa_\alpha$ satisfy Assumptions  \eqref{as:betatauspace}, \eqref{as:kappatau}-\eqref{as:betatauinf} \emph{uniformly} for all $\alpha.$ In fact, they were defined for this very purpose. The reader may refer to Lemmas 
\ref{lm:tech1}, \ref{lm:tech2} and \ref{lm:tech3} in the appendix for precise statements of these uniform properties (we do not include them here since they do not present any major difficulty).

The delicate point here is to obtain fine estimates. This relies on successive and increasingly elaborate steps, which  make great use of the link between $\tau$ and $\beta$ given by Assumptions \eqref{as:kappatau}, \eqref{as:betatau0} and \eqref{as:betatauinf} to go back and forth from the transport term to the fragmentation terms.




\paragraph{\it The first estimate: $L^1$bound for $x^rv_\al$, $r\geq0$.}
For $r\geq2,$ we have by definition and 
due to Assumption~\eqref{as:kappa3}
\begin{eqnarray*}
\int_0^y \f{x^r}{y^r}\kappa_\al(x,y)\,dx&\leq&\int_0^y \f{x^2}{y^2}\kappa_\al(x,y)\,dx\\
&=&\int_0^{\al^k y} \f{x^2}{(\alpha^k y)^2}\kappa(x,\al^k y)\,dx \leq c.
\end{eqnarray*}
So, multiplying the equation  \eqref{eq:dilateeP3} on $v_\al$ by $x^r$ and then integrating on $[0,\infty),$ we find
\begin{eqnarray}\label{eq:momentbound}
\int (1-2c)x^r\beta_\al(x)v_\al(x)\,dx&\leq&r\int x^{r-1}\tau_\al(x)v_\al(x)\,dx \nonumber \\
&=&r\int_{x\leq A}x^{r-1}\tau_\al(x)v_\al(x)\,dx+r\int_{x\geq A}x^{r-1}\tau_\al(x)v_\al(x)\,dx.
\end{eqnarray}
In \eqref{eq:momentbound}, we choose $A=A_{\f{r}{\omega}}$ as defined in Lemma \ref{lm:tech1} (see the appendix) with $\omega<1-2c,$
so that for all $x\geq A$ and independently of $\al,$ we have $rx^{r-1}\tau_\al(x)v_\al(x)\leq\omega x^r\beta_\al(x)v_\al(x).$
We obtain
$$\int x^r\beta_\al(x)v_\al(x)\,dx\leq\f{r\sup_{(0,A_{\f{r}{\omega}})}\{x^{r-1}\tau_\al\}}{1-2c-\omega},$$
and the right hand side is uniformly bounded for $r-1\geq\max(r_0,-\nu)$ when $\al\to L,$ 
due to Lemma~\ref{lm:tech2} in the appendix.
Finally
\beq\label{eq:betaUL1bound}
\forall r\geq\max(2,1+r_0,1-\nu),\quad\exists C_r,\quad\int x^r\beta_\al(x)v_\al(x)\,dx\leq C_r.
\eeq
\
Moreover for all $\al$ we have $\int v_\al dx =1.$
So, once again using Lemma~\ref{lm:tech2} with $\beta^{-1}(x)=O(x^{-\gamma})$ instead of $\tau^{-1}(x)=O(x^{-\nu}),$ we conclude that uniformly in $\al\to L$
\beq\label{eq:UL1bound}x^rv_\al\in L^1(\R_+),\ \forall r\geq 0.\eeq

\paragraph{\it The second estimate: $\theta_\al$ upper bound.}
The next step is to prove the same estimate as \eqref{eq:betaUL1bound} for $0\leq r<\max(2,1+r_0,1-\nu)$ and for this we first establish a bound on $\tau_\al v_\al.$ Let $m=\max{(2,1+r_0,1-\nu)},$ then, using $\e$ and $\rho<\f{1}{2}$ defined in Lemma~\ref{lm:tech3} of the appendix and integrating \eqref{eq:dilateeP3} between $0$ and $x\leq\e,$ we find (noticing that the quantity $\tau_\al(x)v_\al(x)$ is well-defined 
because Theorem~\cite{DG} ensures that $\tau_\al v_\al$ is continuous)
\begin{eqnarray*}
\tau_\al(x)v_\al(x)&\leq&2\int_0^{x}\int\beta_\al(y)v_\al(y)\kappa_\al(z,y)\,dy\,dz\\
&\leq& 2\int\beta_\al(y) v_\al(y)\,dy\\
&=&2\int_0^\e\beta_\al(y)v_\al(y)\,dy+2\int_\e^\infty\beta_\al(y)v_\al(y)\,dy\\
&\leq&2\sup_{(0,\e)}\{\tau_\al v_\al\}\int_0^\e\f{\beta_\al(y)}{\tau_\al(y)}\,dy+ 2\e^{-m}\int_0^\infty y^{m}\beta_\al(y)v_\al(y)\,dy\\
&\leq&2\rho\sup_{(0,\e)}\{\tau_\al v_\al\}+2\e^{-m}C_{m}.
\end{eqnarray*}
Consequently, we obtain
\beq\label{eq:tauULinfbound0}\sup_{x\in(0,\e)}\tau_\al(x)v_\al(x)\leq\f{1+2C_{m}\e^{-m}}{1-2\rho}:=C.\eeq
Then we can write for any $0\leq r<m$
\begin{eqnarray*}
\int x^r\beta_\al(x)v_\al(x)\,dx&=&\int_0^\e x^r\beta_\al(x)v_\al(x)\,dx+\int_\e^\infty x^r\beta_\al(x)v_\al(x)\,dx\\
&\leq&\e^r\sup_{(0,\e)}\{\tau_\al v_\al\}\int_0^\e\f{\beta_\al(x)}{\tau_\al(x)}\,dx+\e^{r-m}\int_\e^\infty x^m\beta_\al(x)v_\al(x)\,dx\\
&\leq&C\rho\e^r+C_m\e^{r-m}:=C_r.
\end{eqnarray*}
Finally we have 
\beq\label{eq:betaUL1bound2}
\forall r\geq0,\quad\exists C_r,\quad\int x^r\beta_\al(x)v_\al(x)\,dx\leq C_r
\eeq
and so
\beq\label{eq:lambdaupperbound}\theta_\al=\int\beta_\al v_\al \leq C_0.\eeq

\paragraph{\it The third estimate: $L^\infty$bound for $x^{-\bar\gamma}\tau_\al v_\al$.}
First, integrating equation~\eqref{eq:dilateeP3} between $0$ and $x$ we find
\beq\label{eq:tauULinfbound1}\tau_\al(x)v_\al(x)\leq2\int \beta_\al(y)v_\al(y)\,dy=2\theta_\al\leq2C_0,\quad\forall x>0.\eeq
It remains to prove that $x^{-\bar\gamma}\tau_\al v_\al$ is bounded in a neighborhood of zero.\\ Let us define $f_\al:x\mapsto\sup_{(0,x)}\tau_\al v_\al.$ If we integrate \eqref{eq:eigenelements} between $0$ and $x'<x,$ we find
$$\tau_\al(x') v_\al(x')\leq2\int_0^{x'}\int\beta_\al(y) v_\al(y)\kappa_\al(z,y)\,dy\,dz\leq2\int_0^x\int\beta_\al(y) v_\al(y)\kappa_\al(z,y)\,dy\,dz$$
and so for all $x$
$$f_\al(x)\leq2\int_0^x\int\beta_\al(y) v_\al(y)\kappa_\al(z,y)\,dy\,dz.$$
Considering $\e$ and $\rho$ from Lemma~\ref{lm:tech3} in the appendix and using \eqref{as:kappatau},  for all $x<\e$ we have
\begin{eqnarray*}
f_\al(x)&\leq&2\int_0^x\int\beta_\al(y) v_\al(y)\kappa_\al(z,y)\,dy\,dz\\
&=&2\int\beta_\al(y) v_\al(y)\int_0^x\kappa_\al(z,y)\,dz\,dy\\
&\leq&2\int_0^\infty\beta_\al(y) v_\al(y)\min\Bigl(1,C\Bigl(\f x y\Bigr)^{\bar\gamma}\Bigr)\,dy\\
&=&2\int_0^x\beta_\al(y) v_\al(y)\,dy+2C\int_x^\e\beta_\al(y) v_\al(y)\Bigl(\f x y\Bigr)^{\bar\gamma}\,dy+2C\int_\e^\infty\beta_\al(y) v_\al(y)\Bigl(\f x y\Bigr)^{\bar\gamma} \,dy\\
&=&2\int_0^x\f{\beta_\al(y)}{\tau_\al(y)}\tau_\al(y) v_\al(y)\,dy+2Cx^{\bar\gamma}\int_x^\e\f{\beta_\al(y)}{\tau_\al(y)}\f{\tau_\al(y) v_\al(y)}{y^{\bar\gamma}}\,dy+2C\int_\e^\infty\beta_\al(y) v_\al(y)\Bigl(\f x y\Bigr)^{\bar\gamma}\,dy\\
&\leq&2f_\al(x)\int_0^\e\f{\beta_\al(y)}{\tau_\al(y)}\,dy+2Cx^{\bar\gamma}\int_x^\e\f{\beta_\al(y)}{\tau_\al(y)}\f{f_\al(y)}{y^{\bar\gamma}}\,dy+2C \e^{-\bar\gamma}\|\beta_\al v_\al\|_{L^1}x^ {\bar\gamma}.
\end{eqnarray*}
If we set ${\mathcal V}_\al(x)=x^{-\bar\gamma}f_\al(x),$  when $\al\to L$ we obtain
$$(1-2\rho){\mathcal V}_\al(x)\leq K_\vincent{$\e$}+2C\int_x^\e\f{\beta_\al(y)}{\tau_\al(y)}{\mathcal V}_\al(y)\,dy$$
and, 
from  Gr\"onwall's lemma, we find that $\dis{\mathcal V}_\al(x)\leq\f {K_\e e^{\f{2C\rho}{1-2\rho}}}{1-2\rho}.$
Finally we get
\beq\label{eq:0Linfbound}\sup_{(0,\e)}\{x^{-\bar\gamma}\tau_\al(x) v_\al(x)\}\leq\f {K_\e e^{\f{2C\rho}{1-2\rho}}}{1-2\rho}.\eeq
The bound~\eqref{eq:tauULinfbound1} with Assumption~\eqref{as:positivity} and the bound~\eqref{eq:0Linfbound} with Lemma~\ref{lm:tech3} 
(in which we replace $\beta_\al(x)$ by $x^{\bar\gamma}$) ensure that the family $\{v_\al\}$ is uniformly integrable.
Along with the first estimate, this result 
 ensures that $\{v_\al\}$ belongs to a compact set in $L^1$-weak 
due to the Dunford-Pettis theorem.
The sequence $\{\theta_\al\}$ also belongs  to a compact interval of $\R_+,$ so there is a subsequence of $\{(v_\al,\theta_\al)\}$ which converges in $L^1$-weak $\times\ \R.$
The limit is a solution to \eqref{eq:eigen:nu:gamma}, but such a solution is unique, 
 so the sequence converges. 
To have convergence in $L^1$-strong, we need one more estimate.

\paragraph{\it The fourth estimate: $W^{1,1}$bound for $x^r\tau_\al v_\al,\ r\geq0.$}
First, the estimates~\eqref{eq:UL1bound}~and~\eqref{eq:0Linfbound} ensure that $ x^r\tau_\al v_\al$ is uniformly bounded in $L^1$ for any $r>-1.$ Then, Equation~\eqref{eq:dilateeP3} ensures that
\beq\label{eq:W11bound}\int\bigl|\f{\p}{\p x}(x^r\tau_\al(x)v_\al(x))\bigr|\,dx\leq r\int x^{r-1}\tau_\al(x)v_\al(x)\,dx+\theta_\al+3\int\beta_\al(x)v_\al(x)\eeq
is also uniformly bounded.
For $r=0,$ the same computation works and finally $x^r\tau_\al v_\al$ is bounded in $W^{1,1}(\R_+)$ for any $r\geq0.$

\

Due to the Rellich-Kondrachov theorem, the consequence is that $\{x^r\tau_\al v_\al\}$ is compact in $L^1$-strong and so converges strongly to $\tau x^{r+\nu}v_\infty(x).$
Then, using Lemma~\ref{lm:tech2} and estimate~\eqref{eq:0Linfbound}, we can write
\begin{eqnarray*}
\int x^r|v_\al(x)-v_\infty(x)|\,dx&\leq& \int_0^\e x^r|v_\al(x)-v_\infty(x)|\,dx+\int_\e^\infty x^r|v_\al(x)-v_\infty(x)|\,dx\\
&\leq&C\int_0^\e\frac{x^{r+\bar\gamma}}{\tau_\al(x)}dx+C\int_\e^\infty x^{r+m}|\tau_\al(x)v_\al(x)-\tau x^\nu v_\infty(x)|\,dx.
\end{eqnarray*}
The first term is small for $\e$ small 
and the second term is small for $\al$ close to $L$ due to the strong $L^1$ convergence of $\{x^r\tau_\al(x)v_\al(x)\}.$
This proves the strong convergence of $\{x^rv_\al(x)\}$ and ends the proof of Theorem~\ref{th:self}.

\end{proof}

\

\section{Further Results}\label{sec:furtherresults}

In this section we examine two ways of going beyond our main result of Theorem \ref{th:self}: either by more refined assumptions than Assumption \eqref{as:taunu:betagamma:L}, and this leads to Corollary \ref{co:limit}, or by direct estimates that do not use self-similarity, and this leads to Theorem \ref{th:asympt}. A third possible direction is to closely examine Assumption \eqref{as:taunu:betagamma:L} in order to generalize Theorem \ref{th:self}; this is done in the Appendix by Proposition \ref{th:relax} (we included it to the appendix since it only slightly improves our result).

\subsection{Critical case}

When $\lim_{x\to0}\beta(x)$ or $\lim_{x\to0}\f{\tau(x)}{x}$ is a positive constant, we can enhance the result of Theorem~\ref{th:coself} if we know the higher order term in the series expansion.
Assumptions~\eqref{eq:exp:beta} and \eqref{eq:exp:tau} of Corollary~\ref{co:limit} are stronger than Assumption~\eqref{as:taunu:betagamma:L}, but provide a more precise result on the asymptotic behavior of $\lb_\al,\ \Lambda_\af.$

\begin{corollary}\label{co:limit}
If $\beta$ admits an expansion of the form
\beq\label{eq:exp:beta}\beta(x)=\beta_0+\beta_1x^{\gamma_1}+\underset{x\to0}o(x^{\gamma_1}),\quad\gamma_1>0\eeq
with $\beta_0>0$ and $\beta_1 \neq0,$ then for $\lambda$ the following expansion holds 
\beq\label{eq:exp:lb}\lambda_\al=\beta_0+\left(\beta_1\int x^{\gamma_1}v_\infty(x)\,dx\right)\,\al^{k\gamma_1}+\underset{\al\to0}o(\al^{k\gamma_1}).\eeq
In the same way, if $\tau$ admits an expansion of the form
\beq\label{eq:exp:tau}\tau(x)=\tau_0x+\tau_1x^{\nu_1}+\underset{x\to0}o(x^{\nu_1}),\quad\nu_1>1\eeq
with $\tau_0>0$ and $\tau_1\neq 0,$ then 
\beq\label{eq:exp:Lb}\Lambda_\af=\tau_0+\left(\tau_1\f{\int x^{\nu_1}v_\infty(x)\,dx}{\int xv_\infty(x)\,dx}\right)\,\af^{l(\nu_1-1)}+\underset{\af\to\infty}o(\af^{l(\nu_1-1)}).\eeq
\end{corollary}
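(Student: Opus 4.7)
The plan is to derive the two expansions by combining two elementary integration identities for the eigenvalues with the self-similar change of variables of Lemma~\ref{lm:self} and the strong $L^1$-convergence of moments provided by Theorem~\ref{th:self}.

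First I would establish the two identities already mentioned in the introduction:
$$\lambda_\al = \int_0^\infty \beta(x)\U_\al(x)\,dx, \qquad \Lb_\af \int_0^\infty x\, \UU_\af(x)\,dx = \int_0^\infty \tau(x)\,\UU_\af(x)\,dx.$$
The first comes from integrating \eqref{eq:eigen:alpha:tau} on $\R_+$ together with $\int\kappa(dx,y)=1$; the second from multiplying \eqref{eq:eigen:alpha:beta} by $x$, integrating by parts (boundary terms vanishing thanks to the estimates on $\tau\U$ in Theorem~\cite{DG}), and using $\int x\kappa(dx,y)=y/2$.

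Next, I would pass to the rescaled variables of Lemma~\ref{lm:self}. Writing $\U_\al(x)=\al^{-k}v_\al(\al^{-k}x)$ and $\UU_\af(x)=\af^{-l}w_\af(\af^{-l}x)$, a direct change of variables gives
$$\lambda_\al = \int_0^\infty \beta(\al^k y)\,v_\al(y)\,dy, \qquad \Lb_\af = \frac{\int_0^\infty \tau(\af^l y)\,w_\af(y)\,dy}{\af^l \int_0^\infty y\, w_\af(y)\,dy}.$$
Substituting \eqref{eq:exp:beta} into the first relation yields $\lambda_\al = \beta_0 + \beta_1 \al^{k\gamma_1}\int y^{\gamma_1} v_\al(y)\,dy + R_\al$, and $\int y^{\gamma_1} v_\al \to \int y^{\gamma_1} v_\infty$ follows immediately from Theorem~\ref{th:self}. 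The analogous reorganization in the second relation gives $\Lb_\af = \tau_0 + \tau_1 \af^{l(\nu_1-1)} \bigl(\int y^{\nu_1} w_\af\bigr)/\bigl(\int y\, w_\af\bigr) + S_\af$, where the ratio converges to the claimed one since $w_\af = v_{1/\af}$ and $\int y v_\infty > 0$.

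The main obstacle is the control of the remainders: one must show
$$\int_0^\infty \bigl|\beta(\al^k y)-\beta_0-\beta_1 \al^{k\gamma_1}y^{\gamma_1}\bigr|\,v_\al(y)\,dy = o(\al^{k\gamma_1}), \qquad \al\to 0,$$
and the analogous $o(\af^{l\nu_1})$ estimate for $\tau$, which after division by $\af^l \int y w_\af$ produces the desired $o(\af^{l(\nu_1-1)})$ bound. For this I would split the integral at the $\al$-dependent threshold $y=\delta/\al^k$. On $\{y\leq \delta/\al^k\}$, the pointwise bound $|\beta(x)-\beta_0-\beta_1 x^{\gamma_1}|\leq \eta x^{\gamma_1}$ for $x\leq \delta$ (which, given $\eta>0$, holds for $\delta$ small enough by \eqref{eq:exp:beta}) yields a contribution bounded by $\eta\, \al^{k\gamma_1}\int y^{\gamma_1} v_\al \leq C\eta\,\al^{k\gamma_1}$, thanks to the uniform moment bound \eqref{eq:UL1bound} from the proof of Theorem~\ref{th:self}. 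On $\{y\geq \delta/\al^k\}$, the polynomial control $\beta(x)\leq C(1+x^\mu)$ coming from $\beta\in\mathcal{P}$ reduces the tail to $\int_{y\geq\delta/\al^k}(1+y^\mu)v_\al(y)\,dy$, and a Markov-type estimate combined with the uniform bound on all moments of $v_\al$ makes this $O(\al^{kN})$ for arbitrarily large $N$, hence negligible compared to $\al^{k\gamma_1}$. The $\Lb_\af$ case is entirely parallel, replacing $\beta,v_\al,\al^k$ by $\tau,w_\af,\af^l$ and invoking $\tau\in\mathcal{P}$ for the tail bound.
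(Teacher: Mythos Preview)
Your proof is correct and follows the same overall strategy as the paper: start from the integrated identities $\lambda_\al=\int\beta\,\U_\al$ and $\Lb_\af\int x\,\UU_\af=\int\tau\,\UU_\af$, pass to the rescaled variables $v_\al,w_\af$, and invoke the strong $L^1$ moment convergence of Theorem~\ref{th:self}. The only genuine difference lies in the treatment of the remainder. The paper does not split the integral; instead it observes that the expansion~\eqref{eq:exp:beta} together with $\beta\in\mathcal P$ yields a global bound $|\beta(y)-\beta_0|\leq C(y^{\gamma_1}+y^m)$ for some $m\geq\gamma_1$, so that $f_\al(x):=\al^{-k\gamma_1}(\beta_\al(x)-\beta_0)/(x^{\gamma_1}+x^m)$ is uniformly bounded and converges pointwise, while $(x^{\gamma_1}+x^m)v_\al\to(x^{\gamma_1}+x^m)v_\infty$ in $L^1$; the product then converges in $L^1$ by an elementary lemma. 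Your $\al$-dependent splitting at $y=\delta/\al^k$ with a Markov tail estimate achieves the same thing by a more hands-on route. The paper's argument is slightly more compact once the global weighted bound is in place; yours is more self-contained and makes the role of the uniform higher moments of $v_\al$ explicit.
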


\begin{proof}

First we assume that $\beta$ admits an expansion of the form \eqref{eq:exp:beta} and we want to prove \eqref{eq:exp:lb}.
By integrating Equation~\eqref{eq:dilateeP3} we know that $\lb_\al \int v_\alpha dx=\int\beta_\al(x)v_\al(x)\,dx$
and so multiplying by $\al^{-k\gamma_1}$ we obtain
$$\al^{-k\gamma_1}(\lb_\al-\beta_0)=\int\al^{-k\gamma_1}(\beta_\al(x)-\beta_0)v_\al(x)\,dx.$$
Now the proof is complete if we prove the convergence
\beq\label{eq:conv:beta}\int\al^{-k\gamma_1}(\beta_\al(x)-\beta_0)v_\al(x)\,dx\xrightarrow[\al\to0]{}\int\beta_1x^{\gamma_1}v_\infty(x)\,dx.\eeq
For this we use Expansion \eqref{eq:exp:beta} which provides, for all $x\geq0,$ 
\beq\label{eq:exp:beta_al}\beta_\al(x)\underset{\al\to0}{=}\beta_0+\beta_1x^{\gamma_1}\al^{k\gamma_1}+o(\al^{k\gamma_1}).\eeq
Let $m\geq\gamma_1$ such that $\dis\limsup_{x\to\infty}x^{-m}\beta(x)<\infty$ (see Assumption~\eqref{as:betatauspace}) and define
$$\dis f_\al:x\mapsto\f{\al^{-k\gamma_1}(\beta_\al(x)-\beta_0)}{x^{\gamma_1}+x^m}.$$
Due to \eqref{eq:exp:beta_al} we know 
 that  $\dis f_\al(x)\underset{\al\to0}{\longrightarrow}\f{\beta_1x^{\gamma_1}}{x^{\gamma_1}+x^m}$ for all $x.$
Moreover, due to Theorem~\ref{th:self} we  have  $(x^{\gamma_1}+x^m)v_\al(x)\xrightarrow[\al\to0]{}(x^{\gamma_1}+x^m)v_\infty(x)$ in $L^1.$
So we simply need to prove that $f_\al$ is uniformly bounded to get \eqref{eq:conv:beta} (see Section~5.2 in~\cite{Goudon}).
Due to \eqref{eq:exp:beta} and to the fact that $\dis\limsup_{x\to\infty}x^{-m}\beta(x)<\infty$ with $m\geq\gamma_1>0,$ we know that there exists a constant $C$ such that
$$|\beta(y)-\beta_0|\leq C(y^{\gamma_1}+y^m),\quad\forall y\geq0,$$
and so, because $\al\to0,$
$$\al^{-k\gamma_1}|\beta(y)-\beta_0|\leq C(\al^{-k\gamma_1}y^{\gamma_1}+\al^{-km}y^m)$$
which implies, for $x=\al^{k}y,$
$$\al^{-k\gamma_1}|\beta(\al^{k}x)-\beta_0|\leq C(x^{\gamma_1}+x^m)$$
and this proves  $f_\al(x)\leq C.$

\

The same method allows 
us to prove the result on $\Lb_\af,$ starting from the identity
$$(\Lambda_\af-\tau_0)\int xw_\af(x)\,dx=\int(\tau_\af(x)-\tau_0x)w_\af(x)\,dx$$
and using the fact that \eqref{eq:exp:tau} provides the expansion
$$\tau_\af(x)\underset{\af\to\infty}{=}\tau_0x+\tau_1x^{\nu_1}\af^{l(\nu_1-1)}+o(\af^{l(\nu_1-1)}).$$
\end{proof}

\subsection{Generalized case}

In this section, we discard Assumption~\eqref{as:taunu:betagamma:L} and give some results regarding the asymptotic behavior of the first eigenvalues for general coefficients.
The techniques used are completely different to the self-similar ones, 
but the results still provide comparisons between $\lb_\al$ and $\beta(x),$ and between $\Lb_\af$ and $\f{\tau(x)}{x}.$

\begin{theorem}
 \label{th:asympt}

\begin{enumerate}

\item {\bf Polymerization dependence.} \label{th:asympt:poly}
\begin{eqnarray}
\label{th:asympt:poly:0}
\text{If }\beta\in L^\infty_{loc}(\R_{+}^*)\text{ and }\dis\limsup_{x\to\infty}\f{\tau(x)}{x}<\infty,&\text{ then } \limsup_{\al\to0}\lambda_\al\leq\limsup_{x\to0}\beta(x).
\\ 
\label{th:asympt:poly:infty}
\text{If }\dis\f1\tau\in L_0^1:=\bigr\{f,\ \exists a>0,\ f\in L^1(0,a)\bigl\},&\text{ then }\liminf_{\al\to\infty}\lambda_\al\geq\liminf_{x\to\infty}\beta(x).
\end{eqnarray}

\item {\bf Fragmentation dependence.} \label{th:asympt:frag}
\begin{eqnarray}
\label{th:asympt:frag:0}
\text{If }\beta\in L^\infty_{loc}(\R_+),\text{ then there exists }r>0&\text{ such that }
&\limsup_{\af\to0}\Lb_\af\leq r\limsup_{x\to\infty}\f{\tau(x)}{x}.
\\
\label{th:asympt:frag:infty}
\text{If }\dis\liminf_{x\to\infty}\beta(x)>0,\text{ and if }\ \f1\tau\in L^1_0, &\text{then} &\lim_{\af\to\infty}\Lb_\af=+\infty.
\end{eqnarray}
\end{enumerate}
\end{theorem}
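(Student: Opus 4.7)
The proof rests on a small set of identities from integration of the eigenequations. Testing \eqref{eq:eigen:alpha:tau} and \eqref{eq:eigen:alpha:beta} against $1$ gives the mass identities $\lb_\al=\int\beta\U_\al$ and $\Lb_\af=\af\int\beta\UU_\af$; testing against $x$ (and using $\int_0^y x\,\kappa(x,y)\,dx=y/2$) gives the first-moment identities $\lb_\al\int x\U_\al=\al\int\tau\U_\al$ and $\Lb_\af\int x\UU_\af=\int\tau\UU_\af$; and integrating the eigenequations on $[0,x]$ (bounding $\int_0^y\kappa(z,y)\,dz\leq 1$) yields the pointwise bounds
\begin{equation*}
\al\,\tau(x)\,\U_\al(x)\leq 2\lb_\al,\qquad \tau(x)\,\UU_\af(x)\leq 2\Lb_\af\quad\text{for every }x>0.
\end{equation*}

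The two \emph{liminf} assertions follow from the pointwise bound by the same scheme, crucially using that $\f{1}{\tau}\in L^1([0,M])$ for every $M>0$ (combining $\f{1}{\tau}\in L^1_0$ with the positivity \eqref{as:positivity} of $\tau$ on compacts). For \eqref{th:asympt:poly:infty}, let $\ell=\liminf_{x\to\infty}\beta$ (trivial if $\ell=0$); for $\eta\in(0,\ell)$ fix $M$ with $\beta\geq\eta$ on $[M,\infty)$. Then $\int_0^M\U_\al\leq(2\lb_\al/\al)\int_0^M\f{dx}{\tau}$, so along any sequence $\al_n\to\infty$ with $\lb_{\al_n}$ bounded one has $\int_0^M\U_{\al_n}\to 0$, hence $\lb_{\al_n}=\int\beta\U_{\al_n}\geq\eta\int_M^\infty\U_{\al_n}\to\eta$; this gives $\liminf_{\al\to\infty}\lb_\al\geq\eta$, and $\eta\uparrow\ell$ yields the conclusion. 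For \eqref{th:asympt:frag:infty}, combining $\ell>0$ with the positivity of $\beta$ on compacts of $(0,\infty)$ produces $\beta\geq c(\e)>0$ on $[\e,\infty)$ for every $\e>0$; if $\Lb_{\af_n}\leq B$ along $\af_n\to\infty$, choose $\e$ so small that $2B\int_0^\e\f{dx}{\tau}<\f{1}{2}$, whence $\int_\e^\infty\UU_{\af_n}\geq\f{1}{2}$ and $\Lb_{\af_n}=\af_n\int\beta\UU_{\af_n}\geq\af_n c(\e)/2\to+\infty$, contradicting boundedness.

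The two \emph{limsup} assertions rely on the moment identities and exploit the dilation/concentration of the eigenvectors. For \eqref{th:asympt:frag:0}, testing \eqref{eq:eigen:alpha:beta} against $x^2$ and invoking \eqref{as:kappa3} yields $\Lb_\af\int x^2\UU_\af\leq 2\int x\tau\UU_\af$; splitting the right-hand integral at $R$ with $\tau/x\leq M:=\limsup_{x\to\infty}\tau/x$ on $[R,\infty)$ gives
\begin{equation*}
\Lb_\af\leq 2M+\f{2R\,\sup_{[0,R]}\tau}{\int x^2\UU_\af\,dx},
\end{equation*}
so the conclusion with $r=2$ follows once $\int x^2\UU_\af\to+\infty$ as $\af\to 0$. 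For \eqref{th:asympt:poly:0}, set $L=\limsup_{x\to 0}\beta$ (trivial if $L=+\infty$), fix $\eta>L$ and $\e$ with $\beta\leq\eta$ on $(0,\e)$; the mass identity then gives $\lb_\al\leq\eta+\int_\e^\infty\beta\U_\al$, reducing the claim to $\int_\e^\infty\beta\U_\al\to 0$, i.e.\ to the concentration $\U_\al\to\da_0$, where $\da_0$ denotes the Dirac mass at zero.

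The main obstacle is therefore to establish these two companion properties: dilation of $\UU_\af$ (mass escaping to infinity so that $\int x^2\UU_\af\to+\infty$) as $\af\to 0$, and concentration of $\U_\al$ at the origin as $\al\to 0$. My plan is by contradiction, following the estimate scheme of Theorem~\ref{th:self}: the pointwise bound and Assumption~\eqref{as:kappatau} supply uniform integrability, so a Dunford--Pettis extraction would produce, from a failing subsequence, an $L^1$-weak cluster point $u$ of positive mass. In the regime $\af\to 0$ the fragmentation term vanishes in the limit and $u$ satisfies $(\tau u)'+\Lambda u=0$ with $\tau u(0)=0$, $u\geq 0$, $\Lambda\geq 0$, which upon integration forces $u\equiv 0$, a contradiction. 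In the regime $\al\to 0$ the transport term vanishes and $u$ satisfies a pure fragmentation equation; this case must be closed by direct weighted-moment estimates extracted from the first-moment identity and the pointwise bound, ruling out nontrivial integrable cluster points.
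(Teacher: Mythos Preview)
Your treatment of the two $\liminf$ assertions \eqref{th:asympt:poly:infty} and \eqref{th:asympt:frag:infty} is correct and is essentially the paper's argument: it combines the pointwise bound $\al\tau\U_\al\leq 2\lb_\al$ (resp.\ $\tau\UU_\af\leq 2\Lb_\af$) with the integrability of $1/\tau$ near $0$ to control the mass on $[0,M]$, and then uses the mass identity.

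The two $\limsup$ assertions, however, are not proved. You reduce \eqref{th:asympt:poly:0} to the concentration statement $\int_\e^\infty\beta\,\U_\al\to 0$ and \eqref{th:asympt:frag:0} to the dilation statement $\int x^2\UU_\af\to+\infty$, but neither claim is established. Your compactness plan has a circularity: to apply Dunford--Pettis you invoke the pointwise bound $\tau\UU_\af\leq 2\Lb_\af$, which requires $\Lb_{\af_n}$ to be bounded---precisely the conclusion you are after. Likewise, passing to the limit in the fragmentation term needs control of $\int\beta\UU_{\af_n}$, hence of high moments, which a bounded $\int x^2\UU_{\af_n}$ alone does not supply (recall $\beta\in\mathcal P$ only gives $\beta(x)\leq C x^m$ for some possibly large $m$). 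For \eqref{th:asympt:poly:0} you already acknowledge that ``this case must be closed by direct weighted-moment estimates''; but those estimates \emph{are} the proof, and once you have them the compactness detour is superfluous. There is also a minor issue in your $x^2$-test for \eqref{th:asympt:frag:0}: the bound $\int_0^R x\tau\,\UU_\af\leq R\sup_{[0,R]}\tau$ can fail since only $x^{r_0}\tau\in L^\infty_{loc}$ is assumed; one must test against $x^r$ with $r\geq r_0+1$, which is why the constant $r$ in the statement is not universal.

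The paper bypasses concentration/dilation entirely. From the moment inequality $\lb\int x^r\U\leq r\int x^{r-1}\tau\,\U$ (obtained by testing the eigenequation against $x^r$) and the polynomial control $\beta(x)\leq C(1+x^r)$, $x^{r-1}\tau(x)\leq C+\overline{\tau_A}\,x^r$ for $r$ large, one gets two coupled inequalities relating $\lb_\al$ and $\int x^r\U_\al$; eliminating the moment yields a quadratic inequality in $\lb_\al$ whose positive root is $\overline{\beta_\e}+O(\al)$ (resp.\ $r\,\overline{\tau_A}+O(\af)$ for $\Lb_\af$), and the result follows by letting $\e\to 0$ (resp.\ $A\to\infty$). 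This closes both $\limsup$ cases in a few lines, with no appeal to compactness or to the limiting equation.
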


We first state a lemma which links the moments of the eigenvector, the eigenvalue and the polymerization rate.
\begin{lemma} \label{lm:momentr} Let $({\cal U},\lb)$ solution to the eigenproblem \eqref{eq:eigenelements}. For any $r\geq0$ we have
\beq\int x^r\U(x)\,dx\leq\f{r}{\lb}\int x^{r-1}\tau(x)\U(x)\,dx. \eeq
\end{lemma}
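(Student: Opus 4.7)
The plan is the standard moment computation: multiply the eigenequation \eqref{eq:eigenelements} by the weight $x^r$, integrate over $(0,\infty)$, and exploit the size-preserving structure of the binary fragmentation kernel to cancel the $\int x^r\beta\mathcal{U}$ terms from both sides. Concretely, I would start by writing
\[
\int_0^\infty x^r\,\partial_x(\tau\mathcal{U})\,dx + \int_0^\infty x^r(\beta+\lambda)\mathcal{U}\,dx = 2\int_0^\infty x^r\int_x^\infty\beta(y)\kappa(x,y)\mathcal{U}(y)\,dy\,dx.
\]
The first term is handled by integration by parts, producing $-r\int x^{r-1}\tau\mathcal{U}\,dx$; the boundary contributions vanish thanks to $\tau\mathcal{U}(0)=0$ and the decay of $x^r\tau\mathcal{U}$ at infinity provided by Theorem~\cite{DG}. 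The right-hand side is reorganized by Fubini into $2\int\beta(y)\mathcal{U}(y)\bigl(\int_0^y x^r\kappa(x,y)\,dx\bigr)dy$.

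The key algebraic step is the pointwise bound
\[
\int_0^y x^r\kappa(x,y)\,dx \leq y^{r-1}\int_0^y x\,\kappa(x,y)\,dx = \frac{y^r}{2},
\]
where I use $x^{r-1}\leq y^{r-1}$ on the support of $\kappa(\cdot,y)$ (valid for $r\geq 1$) together with the first-moment normalization in \eqref{as:kappa1&2}. Substituting this, the right-hand side is bounded by $\int y^r\beta(y)\mathcal{U}(y)\,dy$, which cancels exactly against the $\int x^r\beta\mathcal{U}\,dx$ term on the left. What remains is
\[
\lambda\int x^r\mathcal{U}\,dx \leq r\int x^{r-1}\tau\mathcal{U}\,dx,
\]
which is the claimed inequality after dividing by $\lambda>0$.

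The delicate point is twofold: first, justifying that all the integrals manipulated above are finite (and that the boundary terms indeed vanish at $+\infty$), which is where the regularity and moment bounds $x^r\tau\mathcal{U}\in W^{1,1}(\R_+)$ stated in Theorem~\cite{DG} are essential; and second, handling the case $0\leq r<1$, for which the simple domination $x^{r-1}\leq y^{r-1}$ reverses. For $r\in[0,1)$ one can either argue by monotone/dominated passage to the limit from the estimate at $r=1$ after multiplying by $x^{r-1}$ (using that $x\leq y$ on the kernel's support still yields $\int x^r\kappa(x,y)dx\leq y^r$ and that the $(2-1)\int x^r\beta\mathcal{U}$ leftover can be absorbed when the inequality is recast), or simply observe that the meaningful content of the lemma is for $r\geq 1$, since for $r<1$ the right-hand side may be infinite and the statement becomes vacuous. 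In any event, the main obstacle in the argument is bookkeeping the integrability needed to legitimize Fubini and the integration by parts — once those are secured by the existence theorem, the inequality drops out in one line.
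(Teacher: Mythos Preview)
Your proposal is correct and mirrors the paper's proof exactly: multiply \eqref{eq:eigenelements} by $x^r$, integrate by parts on the transport term, apply Fubini on the fragmentation integral, and use $\int_0^y x^r\kappa(x,y)\,dx\le y^{r-1}\int_0^y x\,\kappa(x,y)\,dx=y^r/2$ to cancel the $\int x^r\beta\,\U$ terms. Your caveat about $r<1$ is well placed --- the paper's own argument silently uses $x^{r-1}\le y^{r-1}$, which requires $r\ge1$, and indeed the lemma is only invoked later with $r\ge r_0+1\ge1$.
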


\begin{proof}
Integrating Equation~\eqref{eq:eigenelements} against $x^r$ we find
\begin{eqnarray*}
&-&\lefteqn{\int rx^{r-1}\tau(x)\U(x)\,dx+\lb\int x^r\U(x)\,dx+\int x^r\beta(x)\U(x)\,dx}\\
&=&2\int x^r\int_0^x\beta(y)\kappa(x,y)\U(y)\,dydx = 2\int\beta(y)\U(y)\int_0^y x^r\kappa(x,y)\,dxdy\\
&\leq &2\int\beta(y)\U(y) y^{r-1}\int_0^y x\kappa(x,y)\,dxdy = \int y^r\beta(y)\U(y)\,dy.
\end{eqnarray*}
\end{proof}

\begin{proof}[Proof of Theorem \ref{th:asympt}.\ref{th:asympt:poly}.\eqref{th:asympt:poly:0}.]
We only have to consider the case $\limsup_{x\to0}\beta(x)<\infty.$ In this case, $\beta\in L^\infty_{loc}(\R_+)$ since it is assumed  that $\beta\in L^\infty_{loc}(\R_{+}^*).$ So for $\e>0,$ we can define $\overline{\beta_\e}:=\sup_{(0,\e)}\beta(x)<\infty$ and, 
due to Assumption~\eqref{as:betatauspace}, there exist positive constants \vincent{$C_\e$} and $r$ such that $\beta(x)\leq Cx^r$ for almost every $x\geq\e.$ As a consequence,  by integration of Equation~\eqref{eq:eigen:alpha:tau} we get
$$\lambda_\al=\int\beta(x)\U_\al(x)\,dx\leq\overline{\beta_\e}+C\int x^r\U_\al(x)\,dx.$$
We can consider that $r\geq r_0+1,$ with $r_0$ defined in Assumption~\eqref{as:betatauspace}, and thus Lemma~\ref{lm:momentr} and Assumption~$\limsup_{x\to\infty}\f{\tau(x)}{x}<\infty$ lead to
$$\int x^r\U_\al(x)\,dx\leq\f{\al r}{\lambda_\al}\int x^{r-1}\tau(x)\U_\al(x)\,dx\leq\f{\al}{\lambda_\al}C\left(1+\int x^r\U_\al(x)\,dx\right)$$
for a new constant $C.$ Combining these two inequalities we obtain
$$\lambda_\al\leq\al C\left(1+\f{1}{\int x^r\U_\al(x)\,dx}\right)\leq\al C\left(1+\f{1}{\lambda_\al-\overline{\beta_\e}}\right).$$
Then, either $\lb_\al\leq\overline{\beta_\e},$ or  by multiplication by $\lb_\al-\overline{\beta_\e}>0$ we obtain 
$$\lambda_\al^2-(\overline{\beta_\e}+\al C)\lambda_\al-(1-\overline{\beta_\e})\al C\leq0,$$
and so
$$\lambda_\al\leq\f12\left(\overline{\beta_\e}+\al C+\sqrt{\overline{\beta_\e}^2+4\al C+\al^2C^2}\right).$$
Finally we have
$$\limsup_{\al\to0}\lambda_\al\leq\overline{\beta_\e}$$
and this is true for any $\e>0,$ so
$$\limsup_{\al\to0}\lambda_\al\leq\limsup_{x\to0}\beta(x).$$
\end{proof}

\begin{proof}[Proof of Theorem \ref{th:asympt}.\ref{th:asympt:poly}.\eqref{th:asympt:poly:infty}.]
Let $A>0$ and define $\underline{\beta_A}:=\inf_{(A,\infty)}\beta.$ Since $\f1\tau\in L^1_0$ and 
due to Assumption \eqref{as:positivity} we can define $I_A:=\int_0^A\f{dx}{\tau(x)}<\infty.$ Then, 
 by integration of Equation~\eqref{eq:eigen:alpha:tau} we get
\begin{eqnarray*}
\lb_\al =\int\beta(y)\U_\al(y)\,dy \geq \underline{\beta_A}\int_A^\infty\U_\al(y)\,dy =\underline{\beta_A}\Bigl(1-\int_0^A\U_\al(y)\,dy\Bigr).
\end{eqnarray*}
We know, by integration of Equation~\eqref{eq:eigen:alpha:tau} between $0$ and $x,$ that for all $x>0,\ \al\tau(x)\U_\al(x)\leq2\lb_\al.$ Thus we obtain
$$\lb_\al\geq\underline{\beta_A}\Bigl(1-\int_0^A2\lb_\al\f{dy}{\al\tau(y)}\Bigr)=\underline{\beta_A}\Bigl(1-\f{2}{\alpha} I_A  \lb_\al\Bigr),$$
and letting first $\alpha\to\infty $ and then $A\to\infty,$ as for the case \eqref{th:asympt:poly:0} above, we obtain \eqref{th:asympt:poly:infty}.
\end{proof}

\begin{proof}[Proof of Theorem \ref{th:asympt}.\ref{th:asympt:frag}.\eqref{th:asympt:frag:0}.]
The fact that $\beta\in L^\infty_{loc}(\R_+)$ 
from Assumption~\eqref{as:betatauspace} ensures the existence of two positive constants $C$ and $r$ such that for almost every $x\geq0,\ \beta(x)\leq C(1+x^r).$
So, integrating Equation~\eqref{eq:eigen:alpha:beta}, we have
$$\Lambda_\af=\af\int\beta(x)\UU_\af(x)\,dx\leq\af C\left(1+\int_{\vincent{$0$}}^\infty x^r\UU_\af(x)\,dx\right).$$
To prove \eqref{th:asympt:frag:0}, we only have to consider the case $\limsup_{x\to\infty}\f{\tau(x)}{x}<\infty.$
So, for any $A>0,$ we can define $\overline{\tau_A}:=\sup_{x>A}\f{\tau(x)}{x}<\infty.$  
Due to Lemma~\ref{lm:momentr} and considering $r\geq r_0+1$ where $r_0$ is defined in Assumption~\eqref{as:betatauspace}, we get
$$\int x^r\UU_\af(x)\,dx\leq\f{r}{\Lambda_\af}\int x^{r-1}\tau(x)\UU_\af(x)\,dx\leq\f{r}{\Lambda_\af}\left(C+\overline{\tau_A}\int x^r\UU_\af(x)\,dx\right).$$
Combining both inequalities we obtain 
$$\Lambda_\af\leq r\left(\overline{\tau_A}+\f{C}{\int x^r\UU_\af(x)\,dx}\right)\leq r\left(\overline{\tau_A}+\f{\af C^2}{\Lambda_\af-\af C}\right).$$
Then, either $\Lb_\af\leq\af C,$ or  multiplication by $\Lb_\af-\af C>0$ leads to 
$$\Lambda_\af^2-(r\overline{\tau_A}+\af C)\Lambda_\af-r(C-\overline{\tau_A})\af C\leq0,$$
and so 
$$\Lambda_\af\leq\f12\left(r\overline{\tau_A}+\af C+\sqrt{(r\overline{\tau_A})^2+4r\af C^2+\af^2C^2}\right).$$
In both cases, letting first $\af\to 0$ and then $A\to \infty,$ we obtain \eqref{th:asympt:frag:0}.
\end{proof}

\begin{proof}[Proof of Theorem \ref{th:asympt}.\eqref{th:asympt:frag:infty}.]
Let $\e>0.$ Since $\liminf_{x\to\infty}\beta(x)>0,$  
due to Assumption~\eqref{as:positivity} we have that $\underline{\beta_\e}:=\inf_{(\e,\infty)}\beta>0.$
Since $\f1\tau\in L^1_0,$  due to Assumption~\eqref{as:positivity} we get that $I_\e:=\int_0^\e\f{dx}{\tau(x)}<\infty$ and also $\lim_{\e\to0}I_\e=0.$ 
By integration of equation~\eqref{eq:eigen:alpha:beta}, we find
\begin{eqnarray*}
\Lb_\af=\af\int\beta(y)\UU_\af(y)\,dy \geq \af\underline{\beta_\e}\int_\e^\infty\UU_\af(y)\,dy
=\af\underline{\beta_\e}\Bigl(1-\int_0^\e\UU_\af(y)\,dy\Bigr).
\end{eqnarray*}
We know, as previously, 
by integration between $0$ and $x,$ that for all $x>0,\ \tau(x)\UU_\af(x)\leq2\Lb_\af.$ Thus we obtain
$$\Lb_\af\geq\af\underline{\beta_\e}\Bigl(1-\int_0^\e2\Lb_\af\f{dy}{\tau(y)}\Bigr),$$
and we get \eqref{th:asympt:frag:infty} as we previously obtained \eqref{th:asympt:poly:infty}.
\end{proof}

\

\section{Applications}\label{sec:appl}

As stated in the introduction, Problem \eqref{eq:temporel} is used to model different kinds of structured populations, so  the way its asymptotic profile depends on the parameters can be of major importance in various fields. In this section, we investigate several possible consequences of our results. In Section \ref{ssec:appl:num}, we first present the numerical scheme we used to illustrate these applications. Sections \ref{ssec:appl:prion} and \ref{ssec:appl:PMCA} focus on the Prion equation, and Section \ref{ssec:appl:cell} introduces a possible use for therapeutic optimization when Problem \eqref{eq:temporel} models the cell division cycle.

Before looking at applications, we 
recall a regularity result whose proof can be found in \cite{M2}.

\begin{lemma}\label{lm:regularity}
Under the assumptions of Section~\ref{ssec:existence}, the functions $\al\mapsto\lb_\al$ and $\af\mapsto\Lb_\af$ are well defined and differentiable on $(0,\infty).$
\end{lemma}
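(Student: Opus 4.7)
My plan is as follows. Well-definedness of $\al\mapsto\lb_\al$ and $\af\mapsto\Lb_\af$ on $(0,\infty)$ is immediate from the existence and uniqueness statement of Theorem~\cite{DG}, applied parameter by parameter to the eigenproblems \eqref{eq:eigen:alpha:tau} and \eqref{eq:eigen:alpha:beta}. I therefore focus on differentiability, which I would establish by perturbation theory around an arbitrary base point $\al_0\in(0,\infty)$; the argument for $\af\mapsto\Lb_\af$ is identical, or may be imported for free through the rescaling of Lemma~\ref{lm:self}.

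The main tool is the implicit function theorem applied to the mapping
\[
F(\al,u,\mu) := \Bigl( \al\,\f{\p}{\p x}(\tau u) + (\beta+\mu)u - 2\int_x^\infty \beta(y)\kappa(\cdot,y)u(y)\,dy,\ \int_0^\infty u\,dx - 1 \Bigr)
\]
defined on $(0,\infty)\times X\times\R$, where $X$ is a weighted $W^{1,1}$-type Banach space that contains $\U_\al$ thanks to the estimates $x^r\tau\U\in W^{1,1}(\R_+)$ supplied by Theorem~\cite{DG}. One checks that $F$ is of class $C^1$ (in fact jointly linear in $(u,\mu,\al)$ except for the bilinearity $\al\,\p_x(\tau u)$) and that $F(\al_0,\U_{\al_0},\lb_{\al_0})=0$, so everything boils down to invertibility of the partial Fréchet derivative $D_{(u,\mu)}F(\al_0,\U_{\al_0},\lb_{\al_0})$.

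By a Fredholm alternative this invertibility reduces to the simplicity of the principal eigenvalue $\lb_{\al_0}$, which itself follows from the Krein-Rutman theorem applied to the associated growth-fragmentation semigroup, as used in~\cite{MMP1,MMP2}. Krein-Rutman simultaneously produces a positive dual eigenfunction $\phi_{\al_0}$; this dual element serves as the Lagrange multiplier and, after differentiating the eigenequation in $\al$ and pairing the result against $\phi_{\al_0}$, yields the Hadamard-type formula
\[
\f{d\lb_\al}{d\al}\bigg|_{\al_0} = \f{\int_0^\infty \tau(x)\,\U_{\al_0}(x)\,\phi_{\al_0}'(x)\,dx}{\int_0^\infty \U_{\al_0}(x)\,\phi_{\al_0}(x)\,dx},
\]
together with an entirely analogous formula for $\f{d\Lb_\af}{d\af}$. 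This both proves differentiability and identifies the derivative.

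The main obstacle is the functional setting. One must identify an $X$ (and its dual, hosting $\phi_{\al_0}$) in which the growth-fragmentation operator is closed with compact resolvent, so that Krein-Rutman delivers isolation and simplicity of $\lb_{\al_0}$, while at the same time ensuring that the pairing $\int \U_{\al_0}\phi_{\al_0}\,dx$ is finite and strictly positive, so that the denominator above does not vanish. The weighted bounds recalled after Theorem~\cite{DG} were tailored precisely for this purpose, and carrying out this functional-analytic construction rigorously is the substance of the argument in~\cite{M2}.
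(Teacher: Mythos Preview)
The paper does not actually supply a proof of this lemma: it simply states the result and refers the reader to~\cite{M2} for the argument. Your proposal is therefore not in conflict with anything in the paper; rather, you have sketched what you expect the argument of~\cite{M2} to be, and your outline (implicit function theorem around $(\U_{\al_0},\lb_{\al_0})$, invertibility of the linearization via Krein--Rutman simplicity, Hadamard-type formula obtained by pairing against the adjoint eigenfunction $\phi_{\al_0}$) is the standard route and gives the correct derivative formula. Since both you and the paper ultimately defer the technical details to~\cite{M2}, there is nothing further to compare.
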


\subsection{Numerical scheme based on Theorem~\ref{th:self}}
\label{ssec:appl:num}
First, we  present the method we use to compute numerically the principal eigenvector $\lb$ without considering any dependence on parameters.
Then we explain how the self-similar change of variable~\eqref{eq:defval} and the convergence result of Theorem~\ref{th:self} can be used to compute the dependences $\al\mapsto\lb_\al$ and $\af\mapsto\Lb_\af,$
when parameters $\al$ and $\af$ are very large or very small.

\

The method used to compute $\lb,$ the 
solution to Equation~\eqref{eq:eigenelements}, 
 is first to compute a numerical approximation of the first eigenvector $\U,$ and then use the identity
$$\lb=\int_0^\infty \beta(x)\U(x)\,dx.$$
General Relative Entropy (GRE) introduced by~\cite{MMP1,MMP2,BP} provides the long time asymptotic behavior of any solution to the fragmentation-drift equation~\eqref{eq:temporel}.
For large times, these solutions behave like $\U(x)e^{\lambda t}$ where $\U$ and $\lambda$ are the eigenelements defined at~\eqref{eq:eigenelements}.
More precisely we have
$$\int_0^\infty |u(x,t)e^{-\lambda t}-\langle u(\cdot,t=0),\phi\rangle \U(x)|\phi(x)\,dx\xrightarrow[t\to\infty]{}0,$$
where $\phi$ is the dual eigenvector of Equation~\eqref{eq:eigenelements} (see~\cite{DG,MMP2} for more details) and $\langle u,\phi\rangle=\int_0^\infty u(x)\phi(x)\,dx.$
In \cite{CCM,LP}, it is even proved   that this convergence occurs exponentially fast under some assumptions on the coefficients.
We use this convergence to compute numerically the eigenvector $\U.$
We consider, for $u_0\in L^1(\R_+),$ 
an initial function satisfying $\int_0^\infty u_0(x)\,dx=1,$ the solution $u(x,t)$ to the fragmentation-drift equation~\eqref{eq:temporel}.
Since we do not yet know the value of $\lb,$ we define the normalized function
$$\tilde u(x,t):=\frac{u(x,t)}{\int_0^\infty u(x,t)\,dx}.$$
We can easily check that $\tilde u$ satisfies the equation
\beq\label{eq:normalized}\p_t \tilde u(x,t)+\p_x\left(\tau(x)\tilde u(x,t)\right)+\left(\int_0^\infty \beta(y)\tilde u(y,t)\,dy\right)\tilde u(x,t)+\beta(x)\tilde u(x,t)=2\int_x^\infty\beta(y)\kappa(x,y)\tilde u(y,t)\,dy,\eeq
with the boundary condition $\tau(0)\tilde u(0,t)=0,$ and that the convergence occurs
\beq\label{eq:conv_normal}\int_0^\infty |\tilde u(x,t)-\U(x)|\phi(x)\,dx\xrightarrow[t\to\infty]{}0.\eeq
The scheme used to compute $\U$ is based on the resolution of Equation~\eqref{eq:normalized} for large times and the use of \eqref{eq:conv_normal} for the stop condition.

Numerically, Equation~\eqref{eq:normalized} is solved on a truncated domain $[0,R]$ so the integration bounds have to be changed and we obtain, for $x\in[0,R],$
\beq\label{eq:truncated}\p_t \tilde u(x,t)+\p_x\left(\tau(x)\tilde u(x,t)\right)+\left(\int_0^R \beta(y)\tilde u(y,t)\,dy\right)\tilde u(x,t)+\beta(x)\tilde u(x,t)=2\int_x^R\beta(y)\kappa(x,y)\tilde u(y,t)\,dy.\eeq
What we lose when we solve this truncated equation are the integral terms $\int_R^\infty\beta(x)\tilde u(x,t)\,dx$ and $\int_R^\infty\beta(y)\kappa(x,y)\tilde u(y,t)\,dy,$
and the outgoing flux $\tau(R)\tilde u(R,t)$ at the boundary $x=R.$
To be as close as possible to the non-truncated solution, we need to choose a sufficiently large $R$  so that these quantities are small enough.
It is proved in \cite{DG} that $\beta(x)\U(x)$ and $\tau(x)\U(x)$ are fast decreasing when $x\to+\infty.$
When $\tilde u$ is close to the equilibrium $\U,$ the value of $R$ has to be adapted so that  $\tau(x)\tilde u(x,t)$ and $\beta(x)\tilde u(x,t)$ be smaller than a fixed parameter $\epsilon$ for $x$ close to $R.$
Parameter $\epsilon$ is expected to be very small, and it is also used for the stop condition~\eqref{eq:stop}.

We assume that $[0,R]$ is divided into $N$ uniform cells and we denote $x_i=i\Delta x$ for $0\leq i\leq N$ with $\Delta x=\f{R}{N}.$
The time is discretized with the time step $\Delta t$ and we denote $t^n=n\Delta t$ for $n\in\N.$
We adopt the finite difference point of view, namely we compute an approximation $\tilde u_i^n$ of $\tilde u(x_i,t^n).$
It remains to explain how we go from the time $t^n$ to the time $t^{n+1}.$
To enforce that $\sum_{i=1}^N \tilde u_i^n=1$ at each time step, we split the evolution into two steps.
First we compute, from $(\tilde u_i^n)_{1\leq i\leq N},$ a vector $(u_i^{n+1})_{1\leq i\leq N}$ which is obtained with the formula
\begin{equation}\label{eq:numsch}
\f{u_i^{n+1}-\tilde u_i^n}{\Delta t}=-\f{\tau_i\tilde u_i^{n+1}-\tau_{i-1}\tilde u_{i-1}^n}{\Delta x}-\beta_iu_i^{n+1}+2\Delta x\sum_{j=1}^{N}\beta_j\kappa_{i,j}\tilde u_j^{n},
\end{equation}
where $\beta_i=\beta(x_i),\ \tau_i=\tau(x_i)$ and $\kappa_{i,j}=\kappa(x_i,x_j).$
This is a semi-implicit Euler discretization of the growth-fragmentation equation~\eqref{eq:temporel}.
We choose this scheme to ensure  stability without any CFL condition, since the scheme is positive.
Then we set
$$\tilde u_i^{n+1}:=\frac{u_i^{n+1}}{\Delta x\sum_{j=1}^{N} u_j^{n+1}}$$
and the discrete integral of $(\tilde u_i^{n+1})_{1\leq i\leq N}$ satisfies $\Delta x\sum_{i=1}^N \tilde u_i^{n+1}=\Delta x\sum_{i=1}^N \tilde u_i^{n}=1.$
Using the $L^1$ convergence \eqref{eq:conv_normal}, we end the algorithm when
\beq\label{eq:stop}\f{\Delta x}{\Delta t}\sum_{i=1}^{N}|\tilde u_i^{n}-\tilde u_i^{n-1}|<\epsilon\eeq
where $\epsilon\ll\Delta x.$
Then 
$$\lb\simeq\Delta x\sum_{i=1}^N \beta_i\tilde u_i^n.$$

The semi-implicit scheme~\eqref{eq:numsch} is efficient to avoid oscillations on the numerical solution, 
 but it is not conservative. 
 This scheme has to be avoided if we want to solve Equation~\eqref{eq:temporel} for any time.
Here we are only interested in the steady state of Equation~\eqref{eq:normalized}, so 
non-conservation does not matter, 
 because the steady state is the same for an implicit or an explicit scheme.

\

Now we want to compute $\lambda_\alpha$ and $\Lambda_\af$ for a large range of $\alpha$ and $\af.$
According to the discussion in 
the introduction, the eigenvectors $\U_{\al}$ and $\UU_\af$ are concentrated at the origin for $\al$ small or $\af$ large and, conversely,
spread out for $\al$ large or $\af$ small.
Then, to avoid an adaptation of the truncation 
parameter $R$ or an adaptation of the discretization size step $\Delta x$ when $\alpha$ and $\af$ vary,
we compute $\theta_\al$ defined in Equation~\eqref{eq:dilateeP3}.
To this end, we need to compute the dilated eigenvector $v_\al$ defined in \eqref{eq:defval} which converges to a fixed profile $v_\infty$ when $\al\to L,$ as stated in Theorem~\ref{th:self}.
This convergence ensures that the vector $v_\al$ does neither disperses nor concentrates too much when $\al$ varies,
and so we can find a truncation 
and a size step which 
work for any $\al\to L.$
It remains to distinguish $L=0$ from $L=+\infty$ by dividing $(0,+\infty)$ into two sets: for instance $(0,1]$ and $(1,+\infty).$
For $0<\alpha<1$, we use the dilation coefficient $k$ associated to $\nu$ and $\gamma$ such that $\tau(x)\underset{x\to0}{\sim}x^\nu$ and $\beta(x)\underset{x\to0}{\sim}x^\gamma.$
For $\alpha>1$ we do the dilation associated to $\nu$ and $\gamma$ such that $\tau(x)\underset{x\to\infty}{\sim}x^\nu$ and $\beta(x)\underset{x\to\infty}{\sim}x^\gamma.$
Finally, we use Equation~\eqref{def:self:equiv} in Lemma~\ref{lm:self} to recover $\lb_\al$ or $\Lb_\af$ from the numerical value of $\theta_\al$ (see Figure \ref{fig:dependency} for a numerical illustration).

\

All the figures in the paper were obtained using this numerical scheme.

\subsection{Steady States of the Prion Equation}\label{ssec:appl:prion}
To model polymerization processes, Equation~\eqref{eq:temporel} can be coupled to an ODE which 
incorporates the evolution of the quantity of monomers.
The so-called ``prion equation'' (see~\cite{Greer,LW,Pruss}) is
\begin{equation}
\label{eq:Greer}
\left\{
\begin{array}{rl}
\dfrac{dV(t)}{dt}&=\displaystyle \xi- V(t)\left[\delta+ \int_{0}^{\infty} \tau(x) u(x,t) \; dx\right],
\vspace{.2cm}\\
\dfrac{\partial}{\partial t} u(x,t) &= \displaystyle - V(t) \frac{\partial}{\partial x} \big(\tau(x) u(x,t)\big) - [\beta(x)+\mu(x)] u(x,t) + 2 \int_{x}^{\infty} \beta(y) \kappa (x,y) \, u(y,t) \, dy,
\vspace{.2cm}\\
u(0,t) &=0,
\end{array} \right.
\end{equation}
where the quantity of monomers is denoted by $V(t).$
In this model, the monomers are prion proteins, produced and degraded by the cells with rates $\xi$ and $\delta,$ and attached to polymers of size $x$ with respect to the rate $\tau(x).$
The polymers are fibrils of misfolded pathogenic proteins, which have the ability to transconform normal proteins (monomers) into abnormal ones by a polymerization process, 
which is not yet very well understood.
The size distribution of polymers $u(x,t)$ is the solution to the growth-fragmentation~equation~\eqref{eq:temporel} in which $V(t)$ is added as a multiplier for the polymerization rate.
A degradation rate $\mu(x)$ is also considered for the polymers. For the sake of simplicity, this rate is assumed to be size-independent in the following study $(\mu(x)\equiv\mu_0).$

Equation~\eqref{eq:Greer} models the proliferation of prion disease.
An individual is said to be infected by prion disease when polymers of misfolded proteins are present, namely when $u(\cdot,t)\not\equiv0$ at the 
time $t.$

The coupling between $V(t)$ and $u(t,x)$ appears in the equation for $u$ as a modulation of the polymerization rate. One can immediately see  the link with the eigenproblem \eqref{eq:eigen:alpha:tau} satisfied by $\U_\al:$  $\U_\al$ is the principal eigenvector  linked to the linearization of the prion equation around a fixed monomer quantity $ V=\alpha.$ 
Investigating the dependence of the fitness $\lb_V$ 
with respect to the  polymerization and fragmentation coefficients 
is a first step towards a better understanding of the  propagation of the disease.
It has been reported that the course of prion infection in the brain follows heterogeneous patterns.
It has been postulated that the neuropathology of prion infection could be related to different kinetics in different compartments of the brain \cite{Collinge}.

Modeling the propagation of prion in the brain requires a good understanding of possible dynamics (\emph {e.g.} monostable, bistable etc).
Such a study can be done through the dependence of the first eigenvalue on parameters~\cite{PG2}.
In \cite{CL1,Greer}, it is shown that, under some conditions, the coexistence of two steady states can occur (one endemic and one disease-free).

A steady state $(V_\infty,u_\infty(x))$ is a solution to
\begin{equation}
\label{eq:steady}
\left\{
\begin{array}{rl}
0&=\displaystyle \xi- V_\infty\left[\delta+ \int_{0}^{\infty} \tau(x) u_\infty(x) \; dx\right],
\vspace{.2cm}\\
\mu_0u_\infty(x) &= \displaystyle - V_\infty \frac{\partial}{\partial x} \big(\tau(x) u_\infty(x)\big) - \beta(x) u_\infty(x) + 2 \int_{x}^{\infty} \beta(y) \kappa (x,y) \, u_\infty(y) \, dy,
\vspace{.2cm}\\
u_\infty(0) &=0.
\end{array} \right.
\end{equation}
The disease-free steady state corresponds to the solution without any polymer $\left(\overline V=\f{\xi}{\delta},\overline u\equiv0\right).$
Other steady states can exist and are called endemic or disease steady states.
They are solutions to System~\eqref{eq:steady} with $V_\infty>0$ and $u_\infty\not\equiv0$ nonnegative.
To know if such disease steady states exist, we recall briefly here the method of \cite{CL2,CL1}.
A positive steady state $u_\infty$ can be seen as an eigenvector solution of \eqref{eq:eigen:alpha:tau} with $\al=V_\infty$ such that \beq\label{eq:lambda=mu}\lb_\al=\lambda_{V_\infty}=\mu_0.\eeq
This shows the crucial importance of a study of the map $V\mapsto\lambda_V.$
Any value $V_\infty$ solution to \eqref{eq:lambda=mu} provides a size distribution of polymers
$$u_\infty(x)=\varrho_\infty\U_{V_\infty}(x).$$
The quantity of polymers $\varrho_\infty$ is then prescribed by the equation 
for monomers and has to satisfy the relation
$$\varrho_\infty=\f{\xi V_\infty^{-1}-\delta}{V_\infty\int\tau(x)\U_{V_\infty}(x)\,dx}.$$
The quantity of polymers has to be positive, 
which is equivalent to the condition
$$V_\infty<\f{\xi}{\delta}=\overline V.$$
Finally, the disease steady states correspond exactly to the zeros of the map $V\mapsto\lb_V-\mu_0$ in the interval $(0,\overline V).$
Due to the different results of Sections~\ref{sec:ssresults}~and~\ref{sec:furtherresults}, we know that this map is not necessarily monotonic, as is assumed in \cite{CL1}.
Thus, by continuity of the dependence of $\lb$ on $V$ (see Lemma~\ref{lm:regularity}), there can exist several disease steady states for a well-chosen $\mu_0$ and a large enough $\overline V=\f\xi\delta.$ 
This point is illustrated in the example below, where there exist two disease steady states.

We can investigate the stability of the disease-free steady state through the results obtained in~\cite{CL2,CL1}.
For this, we introduce the dual eigenvector $\overline\varphi$ of the growth-fragmentation operator with the transport term $\overline V$
\beq\label{eq:dual}
\left \{ \begin{array}{l}
\displaystyle -\overline V\tau(x)\f{\p}{\p x} ( \overline\varphi(x)) + ( \beta (x) + \lb) \overline\varphi(x) = 2 \beta(x)\int_0^x\kappa(y,x) \overline\varphi(y) dy, \qquad x \geqslant0,
\\
\\
\overline\varphi(x)\geq0,\qquad \int_0^\infty \overline\varphi(x)\U_{\overline V}(x)dx =1.
\end{array} \right.\eeq
We assume that we have a case when there exist two constants $K_1$ and $K_2$ such that
\beq\label{as:phi}\left|\tau(x)\f{\p}{\p x}\overline\varphi(x)\right|\leq K_1\overline\varphi(x),\quad\text{and}\quad\tau(x)\leq K_2\overline\varphi(x).\eeq
This assumption generally holds true when $\f{\tau(x)}{x}$ is bounded because $\overline\varphi$ grows linearly at infinity according to general properties proved in~\cite{DG,M1,BP,PR}.
Then we can reformulate the theorems of~\cite{CL2,CL1}.

\begin{theoremCL1}[Local stability]
Suppose that assumption~\eqref{as:phi} holds true and that $\lb_{\overline V}<\mu_0.$ Then the steady state $(\overline V,0)$ is locally non-linearly stable.
\end{theoremCL1}

\begin{theoremCL2}[Persistence]
Suppose that assumption~\eqref{as:phi} holds, 
$V(0)\leq\overline V,\ \int_0^\infty(1+x)u(t,x)\,dx$ is uniformly bounded, and that $\lb_{\overline V}>\mu_0.$
Then the system remains away from the steady state $(\overline V,0).$ More precisely we have:
$$\liminf_{t\to\infty}\int_0^\infty\overline\varphi(x)u(x,t)\,dx>0.$$
\end{theoremCL2}

\

{\it\underline{Example.}}
Let us consider the same coefficients as in Figure~\ref{fig:poly}.
We can choose a small enough $\mu_0$  to ensure the existence of two values $V_1<V_2$ such that $\lb_{V_1}=\lb_{V_2}=\mu_0.$
As a consequence, we know 
due to the previous study that there exists no disease steady state if $\overline V<V_1,$ one if $V_1<\overline V<V_2,$ and two if $\overline V>V_2.$
Concerning the stability of the disease-free steady state, we first notice that the fragmentation rate $\beta(x)$ satisfies the assumption $\beta(x)\leq A+Bx,$ 
which is sufficient to ensure that $\int_0^\infty(1+x)u(t,x)\,dx$ is uniformly bounded (see \cite{CL2} Theorem 2.1).
Thus we can apply the previous theorems so that $(\overline V,0)$ is stable if $\overline V<V_1,$ unstable if $V_1<\overline V<V_2,$ and recovers its (local) stability if $\overline V>V_2.$
In Figure~\ref{fig:steadystates}, the graph of the negative fitness $V\mapsto\mu_0-\lb_V$ is plotted
(because the quantity of polymers influences the evolution of $V(t)$ with a negative contribution) and the zones of stability and unstability for $\overline V$ are pointed out.
The non intuitive conclusion is that an increase in the production rate $\xi$ or a decrease in the death rate $\delta$ can stabilize the disease-free steady state.
In this situation, what happens  is that the largest polymers are the most stable since $\lim_{x\to\infty}\beta(x)=0$ (this situation is biologically relevant, see for instance \cite{Silveira}).
When the number of polymers is large, the polymerization is strong and it 
results in long stable polymers.
Because they do not break easily, their number does not increase very fast, \emph{i.e.} the fitness of the polymerization-fragmentation equation is small.
But the degradation term is assumed to be size-independent, and then the fitness $\lb_V$ becomes smaller than $\mu_0$ for a sufficiently large $V$.
This phenomenon stabilizes the disease-free steady state because, when polymers are injected in a cell, they tend to disappear immediately, 
 since $\lb_{\overline V}<\mu_0.$

\begin{figure}[ht]
\begin{center}
\begin{minipage}{\textwidth}
\psfrag{lambda}[l]{$\mu_0-\lambda_V$}
\psfrag{Vbar}[l]{$\bar V$}
\psfrag{V1}[l]{$V_1$}
\psfrag{V2}[l]{$V_2$}
\psfrag{V}[l]{$V$}
\centering \includegraphics[width=.8\textwidth]
{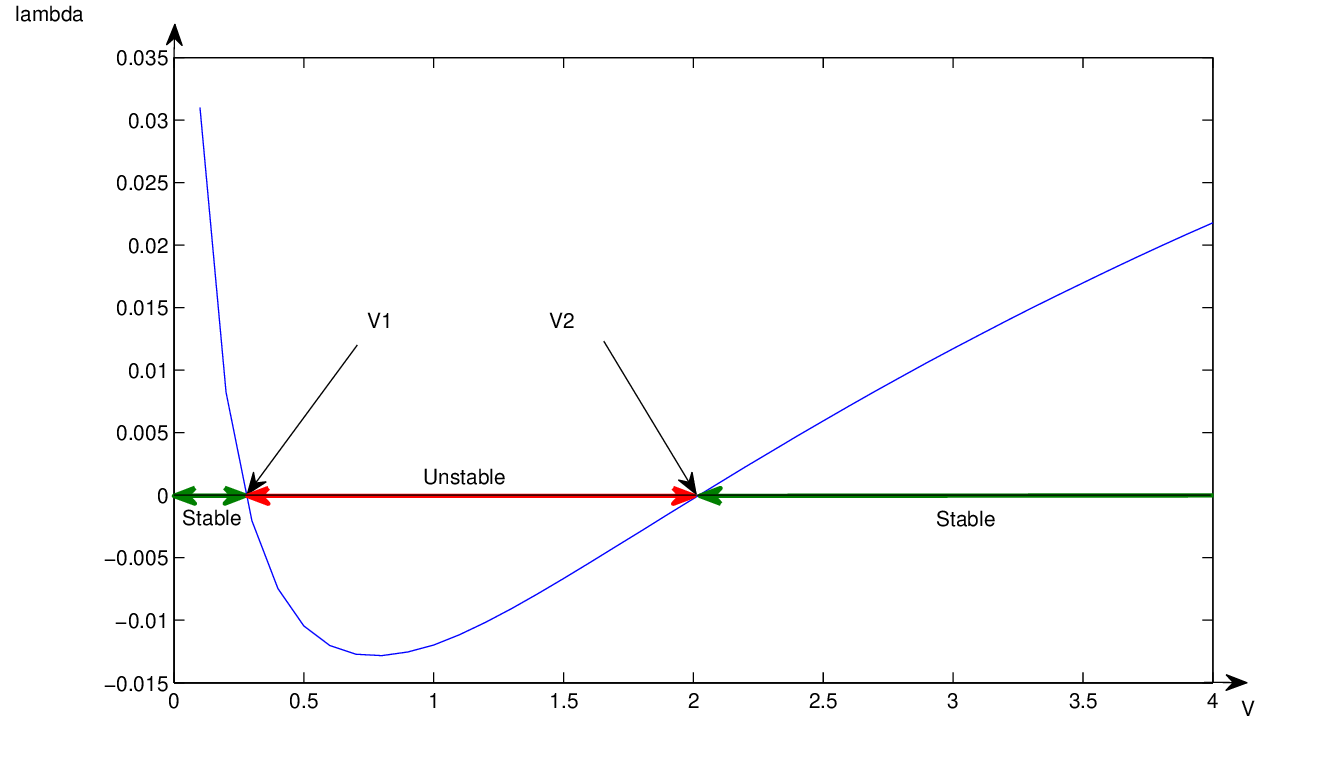}
\caption{The negative fitness $V\mapsto\mu_0-\lb_V$ is plotted for the same coefficients as in Figure~\ref{fig:poly}.
The zeros $V_1$ and $V_2$ correspond to disease steady states and separate the areas of stability or unstability of the disease-free steady state.
}
\label{fig:steadystates}
\end{minipage}
\end{center}
\end{figure}

As regards the stability of the disease steady states, the study is much more complicated.
Nevertheless, we can imagine that $V_1$ is stable and $V_2$ unstable.
This postulate is based on Figure~\ref{fig:steadystates}, on stability results for similar problems \cite{PG2} and on the results obtained in~\cite{Engler, Pruss} in a case where System~\eqref{eq:Greer} can be reduced to a system of ODEs.
For the coefficients considered in~\cite{Engler,Pruss}, $V\mapsto\lambda_V$ is an increasing squareroot function so there is only one value $V_\infty$ such that $\lambda_{V_\infty}=\mu_0.$
Moreover, the dynamics of the solutions is entirely determined: either $\overline V\leq V_\infty$ and the disease-free steady state is the only steady state and is globally asymptotically stable,
or $\overline V>V_\infty$ and the endemic steady state is globally asymptotically stable.
This very strong result means that, if $\overline V\leq V_\infty,$
the individual is resistent to the disease because the misfolded prion proteins ultimately disappear even if a very large quantity is injected.
On the other hand, if $\overline V>V_\infty,$
the individual is very sensitive to the prion disease because the system converges to the endemic steady state as soon as a minute quantity of misfolded proteins is injected.
This alternative is no longer true in the case of Figure~\ref{fig:steadystates} with $\overline V>V_2:$
 in this case, the disease-free steady state is locally stable but coexists with two endemic steady states (with the one corresponding to $V_1$ which is probably locally stable).
So the individual can resist to an injection of abnormal prion proteins if the quantity is small enough (local stability of $\overline V),$
but the injection of a large number of polymers can make the system switch to the endemic equilibrium associated to $V_1.$
Such a  bistability situation has already been exhibited for other models of prion proliferation (see~\cite{Laurent}) but never for the polymerization model~\eqref{eq:Greer}.

\subsection{Optimization of the PMCA protocol
}\label{ssec:appl:PMCA}

Prion diseases, briefly described in Section~\ref{ssec:appl:prion} (see \cite{Lenuzza} for more details), are fatal, infectious and neurodegenerative diseases with a long incubation period.
They include bovine spongiform encephalopathies in cattle, scrapie in sheep and Creutzfeldt-Jakob disease in humans.
It is therefore of importance to be able to diagnose infected individuals to avoid the spread of the disease in a population.
But the dynamics of proliferation is slow and the amount of prion proteins is low at the beginning of the disease.
Moreover, these proteins are concentrated in vital organs like the brain, and are present in only minute quantities in tissues like the blood.
To be able to detect prions in these tissues, a solution is to amplify their quantity.
A promising recent technique of amplification is PMCA (Protein Misfolded Cyclic Amplification).
Nevertheless, this protocol 
is not able to amplify prions for all the prion diseases from tissues with low infectivity.
PMCA diagnosis remains to be improved and mathematical modeling and analysis can help to do so.

\

PMCA is an \emph{in vitro} cyclic process that quickly amplifies very small quantities of prion proteins present in a sample.
In this sample, the pathogenic proteins (polymers) are put in the presence of a large quantity of normal proteins (monomers).
Then the protocol 
consists in 
alternating two phases:

- a phase of incubation during which the sample is left to rest and the polymers can attach the monomers (increasing the size of the polymers),

- a phase of sonication during which waves are sent on the sample in order to break the polymers into numerous smaller ones (increasing the number of the polymers).

To model this process, we can use the growth-fragmentation equation~\eqref{eq:temporel} as in~\eqref{eq:Greer}.
The main difference is that the PMCA takes place \emph{in vitro}, 
and there is no production of monomers.
As there is such a large number of monomers, in order to improve the polymerization, we can neglect their consumption by the polymerization process and assume that their concentration remains constant during the PMCA.
We now introduce the ``sonication'' into the equation.
Because the sonication phase increases the fragmentation of polymers, an initial modeling can be to add a time-dependent parameter $\af(t)$ in front of the fragmentation parameter $\beta(x).$
Then the 
alternating incubation-sonication phases correspond to a rectangular function $\af(t)$ which is equal to $1$ during the incubation time (since the sample is left to rest),
and $\af_{max}$ during the sonication pulse (where $\af_{max}$ represents the maximal power of the sonicator).
We obtain the model
\beq\label{eq:PMCA}
\dfrac{\partial}{\partial t} u(x,t) = \displaystyle - V_0\frac{\partial}{\partial x} \big(\tau(x) u(x,t)\big) - \af(t)\beta(x) u(x,t) + 2 \af(t)\int_{x}^{\infty} \beta(y) \kappa (x,y) \, u(y,t) \, dy,
\eeq
where $u(x,t)$ still denotes the quantity of polymers of size $x$ at time $t.$

With this model, the problem of PMCA improvement becomes a mathematical optimization problem:
find a control $\af(t)$ which maximizes the quantity $\int xu(T,x)\,dx$ (total mass of pathogenic proteins) at a fixed final time $T.$
The answer to this problem is difficult and a first natural simplification is to consider a control $\af(t)\equiv\af$ which does not depend on time.
In this case the control is a parameter for Equation~\eqref{eq:PMCA} and
the optimization of the payoff $\int xu(T,x)\,dx$ for a large time $T$ reduces to the optimization of the fitness $\Lambda_\af$ of the population. 
Is $\af_{max}$ the best constant to maximize $\Lb_\af?$
Is there a compromise $\af_{opt}\in(1,\af_{max})$ to be found?
The answer depends on the coefficients $\tau$ and $\beta$ as indicated by the different theorems presented in this paper.
More precisely, Theorem~\ref{th:self} ensures that an optimum $\af_{opt}$ can exist between $1$ and $\af_{max}$, and an example is proposed below.

\

{\it\underline{Example.}}
We consider the same coefficients as in Figure~\ref{fig:frag} and suppose that the sonicator can multiply by $4$ the fragmentation at its maximal power.
Then in our model $\af_{max}=4$ and we can see in Figure~\ref{fig:PMCA} that the best strategy to maximize the fitness with a constant coefficient is not the maximal power, 
but an intermediate $\af_{opt}$ between $1$ and $\af_{max}.$

\begin{figure}[ht]
\begin{center}
\begin{minipage}{.9\textwidth}
\psfrag{lambda}[l]{$\Lb_\af$}
\psfrag{alopt}[l]{$\af_{opt}$}
\psfrag{4}[c]{$\af_{max}$}
\psfrag{alpha}[l]{$\af$}
\psfrag{1}[l]{$1$}
\centering\includegraphics [width=.6\linewidth]{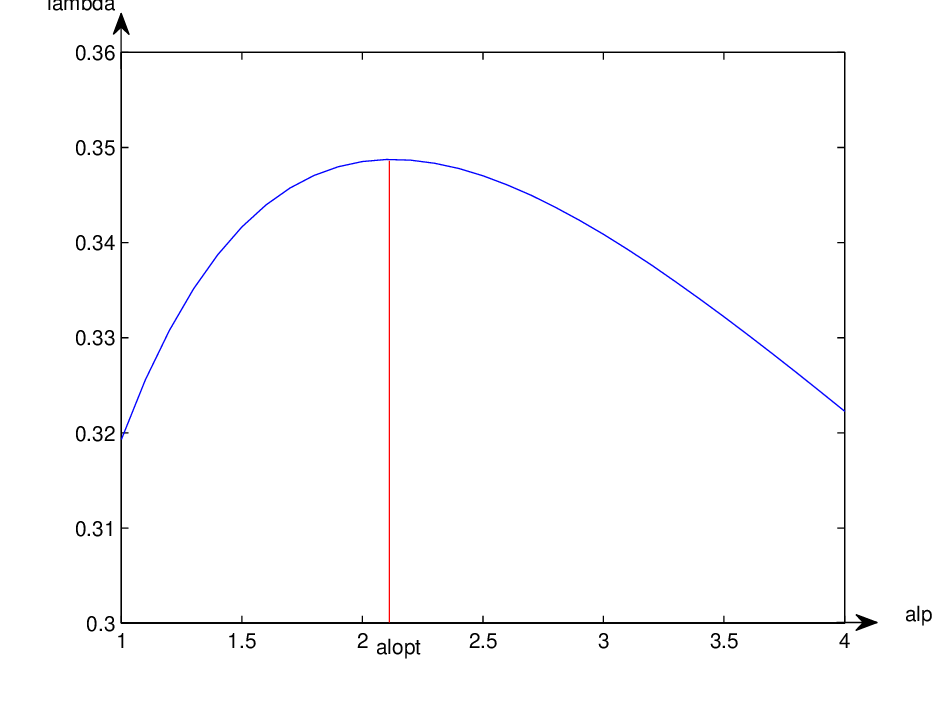}
\caption{The fitness is plotted as a function of $\af$ for the coefficients of Figure~\ref{fig:frag}.
There is a sonication value $\af_{opt}$ in the interior of the window $[1,\af_{max}]$ which maximizes this fitness.
}
\label{fig:PMCA}
\end{minipage}
\end{center}
\end{figure}

This value $\af_{opt}$ should be computed from experimental values of the coefficients, and the corresponding strategy would then consist in a permanent sonication with this optimal power.
But, due to the heat generated by the sonicator, it is not possible to sonicate constantly throughout the entire experiment.
This is why the experimentalists use a periodic protocol with "rest" phases  during which the sample cools down.
The value $\af_{opt}$ may provide informations on the optimal ratio between the time of incubation and sonication phases.
For instance one could try a rectangular control such that the time average is equal to $\af_{opt}.$
More generally, the question is whether we can do as well as or even better than the constant $\af_{opt}$ with a periodic control.
The Floquet theory (see~\cite{BP} for instance) provides a principal eigenvalue for periodic controls.
The comparison between the Floquet and the Perron eigenvalues has to be investigated to optimize the PMCA technique (see~\cite{Lepoutre,Clairambault} for such comparisons on cell cycle models).
The question becomes one of finding a periodic control with an associated Floquet eigenvalue that is as close as possible to $\Lambda_{\af_{opt}},$ or even better.
This problem is adressed in~\cite{CalvezGabriel} for a discrete model, for which the Floquet theory is well established.
The link is made between the eigenvalue optimization problem (for constant and periodic controls) and the optimal control for a final time $T<\infty,$
which is to optimize the total mass $\int xu(T,x)\,dx.$
Different situations are observed where $\af_{opt}$ is the best control or can be improved using a periodic control.

\subsection{Therapeutic optimization for a cell population}\label{ssec:appl:cell}
When Problem \eqref{eq:temporel} models the evolution of a size-structured cell population (or yet a protein-, label-, parasite-structured population), $\tau$ represents the growth rate of the cells and $\beta$ their division rate. It is of great interest to know how a change on these rates can affect the Malthus parameter of the total population, see for instance \cite{Lepoutre,Clairambault}. It is possible to act on the growth rate by changing the nutrient properties - the richer the environment, the faster the growth rate of the cells. We can model such an influence by Equation \eqref{eq:eigen:alpha:tau}, and the question is then how to make $\lb_\al$ as large (if we want to speed up the population growth, for instance for tissue regeneration) or as small (in the case of cancerous cells)  as possible.

Plausible assumptions (see \cite{DMZ} for instance for the case of a size-structured population of E. Coli) for the growth of individual cells is that it is exponential up to a certain threshold, meaning that $\tau(x)=\tau x$ in a neighborhood of zero, and tending to a constant (or possibly vanishing) around infinity, meaning that the cells reach some maximal size or protein-content, leading to $\tau \underset{x\to\infty} \to \tau_\infty <+\infty.$ 

Concerning the division rate $\beta,$ it most generally vanishes around zero, either of the form $\beta(x) \sim \beta x^\gamma$ with $\gamma>1$ or with support $[b,\infty]$ with $b>0,$ 
and it has a maximum, and then decreases for large $x$ - 
and vanishes. Note that for $\tau,$ as for $\beta,$ very little is known regarding their precise behavior for large sizes $x,$ since such values are very rarely reached by cells in the real world.

These assumptions allow us to apply our results. Theorem~\ref{th:coself} and Corollary~\ref{co:limit} lead to vanishing Malthus parameter $\lb_\al$ either for $\al\to 0$ or for $\al\to + \infty.$ This means that 
for cancer drugs, stressing the cells by diminishing nutrients can 
be efficient, 
which is very intuitive and it is known and used for tumor therapy 
(by preventing 
vascularization for instance). What is less intuitive is that forcing 
tumor cells to grow too rapidly in size could also reveal an efficient strategy, as soon as it is established that the division rate decreases for large sizes (this last point could be studied by inverse problem techniques, see \cite{PZ,DPZ,DMZ}). It 
recalls the same ideas as for 
prions, as discussed in Section \ref{ssec:appl:prion}.

In contrast, 
in order to optimize tissue regeneration for instance, these results tend to prove that there exists an optimal value for $\alpha$ such that the Malthus parameter is maximum. This value can be established numerically (see Section \ref{ssec:appl:num} and \cite{GabrielTine}) as soon as the shape of the division rate is known, for instance by using the previously-mentioned inverse problem techniques.

\section*{Conclusion}

The first motivation of our research was to investigate the dependence of the dominant eigenvalue of Problem \eqref{eq:temporel} upon the coefficients $\beta$ and $\tau,$ since a first 
and erroneous idea, based on simple cases, was that it should be monotonic (see \cite{CL1,CL2}). By the use of a self-similar change of variables, we have explored the asymptotic behavior of the first eigenvalue when fragmentation dominates the transport term or \emph{vice versa}. This lead us to counter-examples, 
 where the eigenvalue depends on the coefficients in a non-monotonic way. 
Moreover, these counter-examples are far from being exotic and seem perfectly plausible in many applications, as shown in Section \ref{sec:appl}.  A still open problem is thus to find what would be necessary and sufficient assumptions on $\tau$ and $\beta,$ or better still on the ratio $\f{x\beta}{\tau},$ so that $\lb_\al$ or $\Lb_ \af$ would indeed be  monotonic with respect to $\al$ or $\af.$

Concerning our assumptions, a first glance at the statement of Theorem \ref{th:coself} gives the impression  that only the behavior of the fragmentation rate $\beta$ plays a role in the asymptotic behavior of $\lb_\al,$ and only the ratio $\f{\tau}{x}$ in the behavior of $\Lb_\af.$ This seems puzzling and counter-intuitive. In reality, things are not that simple: to ensure the well-posedness of  eigenvalue problems \eqref{eq:eigen:alpha:tau} and \eqref{eq:eigen:alpha:beta}, Assumptions \eqref{as:betatau0} and \eqref{as:betatauinf} strongly link $\tau$ with $\beta,$ so that a dependence on $\beta$ hides a dependence on $\tau$ and \emph{vice versa.} Moreover, the mathematical techniques used here (moment estimates, multiplication by polynomial weights) force us to restrict ourselves to the space $\cal P$ of functions of polynomial growth or decay. The questions of how 
to relax these (already almost optimal, as shown in \cite{DG}) assumptions and how, if possible, 
to express them in terms of a pure comparison between $\tau,$ $\kappa$ and $\beta$ like in Assumptions \eqref{as:betatau0} and \eqref{as:betatauinf} are still open.

\

{\bf Acknowledgment.} The research of M. Doumic and P. Gabriel is supported by the Agence Nationale de la Recherche, Grant No. ANR-09-BLAN-0218 TOPPAZ. We thank Glenn Webb for his corrections. 

\section*{Appendix 1: technical lemmas}


Here we assume some slight generalizations of Assumption \eqref{as:taunu:betagamma:L}, as 
specified in each lemma, in order to make it 
clear where each part of Assumption \eqref{as:taunu:betagamma:L} is necessary. The other assumptions are those of Theorem \ref{th:self}. 

Lemmas \ref{lm:tech1}, \ref{lm:tech2} and \ref{lm:tech3} prove, respectively,    
that $\beta_\alpha,$ $\tau_\alpha$ and $\kappa_\alpha$ satisfy Assumption \eqref{as:betatauinf}, \eqref{as:betatauspace} and \eqref{as:kappatau}-\eqref{as:betatau0} \emph{uniformly} for all $\alpha.$

\begin{lemma}\label{lm:tech1}
Suppose 
\beq\label{as:taunu:betagamma:L:gen1}
\exists\;\nu,\;\gamma\in\R\quad\text{s.t.}\quad 1+\gamma-\nu >0,\quad\text{and}\quad \frac{\tau(x)}{\beta(x)} \underset{x\rightarrow L}{=} O( x^{\nu-\gamma})\, .
\eeq
Then for all $r>0,$ there exist $A_r>0$ and $\mathcal{N}_L$ a neighborhod of $L$ such that
\beq \text{for a.e.}\ x\geq A_r \ \text{and for all}\ \alpha \in \mathcal{N}_L,\quad  \f{x\beta_\al(x)}{\tau_\al (x)} \geq r.\eeq

\end{lemma}

\begin{proof}
Let $r>0.$ \\
For all $\al>0,$  
using Assumption \eqref{as:betatauinf} we define 
$$p_\al:=\inf\left\{p\,:\, \f{x\beta(x)}{\al \tau(x)} \geq  r ,\ \text{for}\ a.\,e.\ x\geq p\right\}.$$
Observe that $p_\alpha$ is nondecreasing.
Let $\e>0.$ 
By definition of $p_\al,$ there exists a sequence $\{\xi_\al\}$ with values in $[1-\e,1]$ such that
\beq\label{eq:definf}           \f{\xi_\al p_\al\beta(\xi_\al p_\al)}{\al \tau(\xi_\al p_\al)} \leq  r  .\eeq
Assumptions \eqref{as:betatauspace} and \eqref{as:positivity} 
ensure  $\dis\f{\tau}{x\beta}\in L^\infty_{loc}(\R_+^*)$ so that   $\dis p_\al\xrightarrow[\al\to +\infty]{}+\infty$ (
otherwise, since it is nondecreasing, it would tend to a finite limit, 
which contradicts the definition of $p_\al$). We also have
 $\dis\f{\tau}{x\beta} >0$ on $\R_+^*$ so that $\dis p_\al\xrightarrow[\al\to 0]{}0.$ Hence, for some constant $C$ 
\beq\label{eq:equivtaubetap}\f{\tau(\xi_\al p_\al)}{\beta(\xi_\al p_\al)}\underset{\al\to L}{\leq }C (\xi_\al p_\al)^{\nu-\gamma}.\eeq
Then, for some absolute constant $C$, Inequalities \eqref{eq:definf}-\eqref{eq:equivtaubetap} lead to  
$$(1-\e) \f{p_\al}{\al^k}\leq\f{\xi_\al p_\al}{\al^k}\leq \left[C \f{\xi_\al p_\al\beta(\xi_\al p_\al)}{\al \tau(\xi_\al p_\al)}\right]^k\leq  C^k  r^k$$
which implies that $\dis\limsup_{\al\to L}\f{p_\al}{\al^k}\leq C^k  r^k$ is finite. We can define $A_r$ by
$$A_r:=1+\limsup_{\al\to L}\f{p_\al}{\al^k}.$$
Then for any $x>A_r,$  when $\al\to L,$ we have $\al^{k}x>p_\al$ and so
$$\f{\al^k x\beta (\alpha^k x)}{\alpha \tau(\alpha^kx)}\geq r, $$
and by definition of $\beta_\al$ and $\tau_\al$ we obtain the desired result.
\end{proof}

\begin{lemma}\label{lm:tech2}
Suppose that
\beq\label{as:taunu:L:0}
\exists\;\nu\in\R\quad\text{s.t.}\quad \tau(x)\underset{x\rightarrow L}{=} O( x^{\nu})\quad\text{and}\quad\exists\,r_0>0\quad\text{s.t.}\quad x^{r_0}\tau(x)\in L_{loc}^\infty(\R_+).
\eeq
Then for all $A>0$ and $r\geq\max{(r_0,-\nu)},$ there exist $C>0$ and ${\mathcal N}_L$ a neighborhood of $L$ such that
$$\text{for a.e.}\ x\in[0,A]\ \text{and for all}\ \al\in{\mathcal N}_L,\quad x^r\tau_\al(x)\leq C.$$
Suppose that
\beq\label{as:taunu:L:inf}
\exists\;\nu\in\R\quad\text{s.t.}\quad \tau^{-1}(x)\underset{x\rightarrow L}{=} O( x^{-\nu})\quad\text{and}\quad\exists\,\mu>0\quad\text{s.t.}\quad \inf_{x\in[1,+\infty)}x^{\mu}\tau(x)>0.
\eeq
for all $\e>0$ and $m\geq\max(\mu,-\nu),$ there exist $c>0$ and ${\mathcal N}_L$ a neighborhood of $L$ such that
$$\text{for a.e.}\ x\geq\e\ \text{and for all}\ \al\in{\mathcal N}_L,\quad x^{m}\tau_\al(x)\geq c.$$
\end{lemma}

\begin{proof}
We 
treat separately the case $L=0$ and $L=+\infty.$

Let us start with $L=0.$ Notice that in this case, if $\tau(x)= O( x^{\nu}),$ 
 then 
 due to Assumption \eqref{as:betatauspace}, 
  $r_0=-\nu,$ 
  we obtain \eqref{as:taunu:L:0}.
Considering $r\geq-\nu,$ for some constant $C>0$ it holds 
\begin{eqnarray*}
\supess_{x\in[0,A]}\,(x^{r}\tau_\al(x))&=&\supess_{y\in[0,\al^kA]}(\al^{-k(r+\nu)}y^r\tau(y))\\
&\underset{\al\to0}{\leq}&C\sup_{[0,\al^kA]}(\al^{-k(r+\nu)}y^{r+\nu})=CA^{r+\nu}.
\end{eqnarray*}
For $m\geq\max(\mu,-\nu)$ and using Assumption~\eqref{as:taunu:L:inf},  for some constants $c_1,c_2>0,$ we have
\begin{eqnarray*}
\infess_{x\in[\e,\infty)}\,(x^{m}\tau_\al(x))&=&\infess_{y\in[\al^k\e,\infty)}(\al^{-k(m+\nu)}y^{m}\tau(y))\\
&\geq&\min\left(\infess_{[\al^k\e,1]}\,(\al^{-k(m+\nu)}y^{m}\tau(y)),\infess_{[1,\infty)}\,(\al^{-k(m+\nu)}y^{\mu}\tau(y))\right)\\
&\underset{\al\to0}{\geq}&\min\left(c_1\inf_{[\al^k\e,1]}(\al^{-k(m+\nu)}y^{m+\nu}),c_2\al^{-k(m+\nu)}\right)=\min\left(c_1\e^{m+\nu},c_2\al^{-k(m+\nu)}\right)
\end{eqnarray*}

Now we consider $L=+\infty$ and $r\geq\max{(r_0,-\nu)}.$ 
Due to Assumption~\eqref{as:taunu:L:0},  for some constants $C_1,C_2>0,$ we have 
\begin{eqnarray*}
\supess_{x\in[0,A]}\,(x^{r}\tau_\al(x))&=&\supess_{y\in[0,\al^kA]}(\al^{-k(r+\nu)}y^r\tau(y))\\
&\leq&\supess_{[0,1]}\,(\al^{-k(r+\nu)}y^{r_0}\tau(y))+\supess_{[1,\al^kA]}\,(\al^{-k(r+\nu)}y^{r}\tau(y))\\
&\underset{\al\to \infty}{\leq}&C_1\al^{-k(r+\nu)}+C_2A^{r+\nu}.
\end{eqnarray*}
For $m\geq-\nu$ and using Assumption~\eqref{as:taunu:L:inf},  for some $c>0$ we obtain 
\begin{eqnarray*}
\infess_{x\in[\e,\infty)}\,(x^{m}\tau_\al(x))&=&\infess_{y\in[\al^k\e,\infty)}(\al^{-k(m+\nu)}y^{m}\tau(y))\\
&\underset{\al\to\infty}{\geq}&c\inf_{[\al^k\e,\infty)}(\al^{-k(m+\nu)}y^{m+\nu})=c\e^{m+\nu}.
\end{eqnarray*}
\end{proof}

\begin{lemma}\label{lm:tech3}
Suppose that
\beq\label{as:taunu:betagamma:L:gen2}
\exists\;\nu,\;\gamma\in\R\quad\text{s.t.}\quad \gamma+1-\nu >0,\quad\text{and}\quad \frac{\beta(x)}{\tau(x)}\underset{x\rightarrow L}{=} O( x^{\gamma-\nu}).
\eeq
Then for all $\rho>0,$ there exist $\e>0$ and $\mathcal{N}_L$ a neighborhood of $L$ such that
$$\forall\al\in\mathcal{N}_L,\qquad\int_0^\e\f{\beta_\al(x)}{\tau_\al(x)}\,dx\leq\rho.$$
\end{lemma}

\begin{proof}
Due to Assumption~\eqref{as:taunu:betagamma:L:gen2},  for some constant $C>0,$ we have
$$\int_0^\e\f{\beta_\al(x)}{\tau_\al(x)}\,dx=\f1\al\int_0^{\e\al^k}\f{\beta(x)}{\tau(x)}\,dx\underset{\al\to L}{\leq}C\f1\al\int_0^{\e\al^k}x^{\gamma-\nu}dx= C k\e^{\f1k}.$$
The result follows for $\e$ small enough.
\end{proof}

\section*{Appendix 2: Relaxed Case}

In the same spirit as in Lemmas \ref{lm:tech1} to \ref{lm:tech3}, we relax Assumption \eqref{as:taunu:betagamma:L} and 
examine if the asymptotic behavior of $\lb_\al$ and $\Lambda_\af$ obtained in Theorem~\ref{th:coself} remains 
 true.
The case we are the most interested in is the case when the limits are zero (see the applications at Section~\ref{sec:appl}). 
Is the condition $\dis\lim_{x\to L}\beta(x)=0\ (\text{resp.}\lim_{x\to\f1L}\f{\tau(x)}{x}=0)$ necessary and sufficient to have $\dis\lim_{\al\to L}\lambda_\al=0\ (\text{resp.}\ \lim_{\af\to L}\Lb_\af=0)?$
The following proposition gives partial results in the direction of a positive answer to this question.
The assumptions required are weaker, but the results also are 
weaker. 
We obtain asymptotic behavior for the eigenvalue, 
 but cannot say anything yet on the eigenvector behavior.

\begin{proposition}
 \label{th:relax}
Let us suppose that all assumptions of Theorem \ref{th:self} are satisfied except Assumption \eqref{as:taunu:betagamma:L}.
\begin{enumerate}
\item\label{th:relax:xzero}
If $\tau(x) \underset{x\rightarrow0}{=} o( x^\nu),\ \beta\underset{x\rightarrow0}{=} O(x^\gamma)$ and $\beta(x)^{-1}\underset{x\rightarrow0}{=} O( x^{-\gamma})$ with $\gamma+1-\nu>0,$ we have 
$$\text{if}\ \gamma>0,\ \text{then}\ \lim_{\al\to0}\lb_\al=0, \ \text{and more precisely} \quad \lb_\al \underset{\al\rightarrow 0}{=} o(\al^{\f{\gamma}{1+\gamma-\nu}}),$$

$$\text{if}\ \nu\geq1,\ \text{then}\ \lim_{\af\to\infty}\Lb_\af=0,\ \text{and more precisely}\quad \Lb_\af\underset{\af\rightarrow \infty}{=}o(\af^{\f{1-\nu}{1+\gamma-\nu}}).$$

\item\label{th:relax:xinfinity}
If $\beta(x)\underset{x\rightarrow\infty}{=} o( x^\gamma)$ and $\tau(x)^{-1}\underset{x\rightarrow\infty}{=} O( x^{-\nu})$ with $\gamma+1-\nu>0$ and $\gamma\leq0$ (so that $\nu<1$), we have 
$$\lim_{\al\to\infty}\lb_\al=\lim_{\af\to0}\Lb_\af=0,\ \text{and more precisely} \quad \lb_\al \underset{\al\rightarrow +\infty}{=} o(\al^{\f{\gamma}{1+\gamma-\nu}}),\ \Lb_\af \underset{\af\rightarrow0}{=} o(\af^{\f{1-\nu}{1+\gamma-\nu}}).$$
\end{enumerate}

\end{proposition}

{\bf Remark.} In the second assertion of this proposition, we notice that Assumption~\eqref{as:betatauinf} means $\f{\tau(x)}{x}\underset{x\rightarrow\infty}{=} o(\beta(x)),$ so the condition $\beta(x)=o( x^\gamma)$ with $\gamma\leq0$ imposes $\lim_{x\to\infty}\f{\tau(x)}{x}=0.$

\begin{proof}[Proof of Proposition \ref{th:relax}.\ref{th:relax:xzero}.]
We perform the dilation defined by \eqref{eq:defval}: $v_\al(x)=\al^k\U_\al(\al^k x)$ with $k=\f{1}{1+\gamma-\nu}.$
Due to the assumption $\beta(x)^{-1}\underset{x\rightarrow0}=O(x^{-\gamma})$ and $\tau=O(x^\nu),$ the conclusions of Lemma~\ref{lm:tech1} and Lemma~\ref{lm:tech2} still hold true.
Hence we have the following bound (see the first estimate in the proof of Theorem \ref{th:self})
$$\int x^r\beta_\al(x)v_\al(x)\,dx\leq\f{r \sup_{(0,A_\f{r}{\omega})}(x^{r-1}\tau_\al)}{1-2c-\omega}$$
where $\tau_\al(x)$ and $\beta_\al$ are defined by \eqref{eq:def:taual}, and the right-hand 
side is bounded uniformly in $\al$ for $r\geq\max{(2,1+r_0,1-\nu)}.$
Let $\e>0$ and write
\begin{eqnarray*}
 \alpha^{-\f{\gamma}{1+\gamma-\nu}} \lb_\al=\theta_\al=\int\beta_\al v_\al&\leq&\int_0^\e\beta_\al(x)v_\al(x)\,dx+\e^{-r}\int_0^\infty x^r\beta_\al(x)v_\al(x)\,dx\\
&\leq&\sup_{(0,\e)}\beta_\al+\e^{-r}\f{r\sup_{(0,A_{\f r\omega})}(x^{r-1}\tau_\al)}{1-2c-\omega}.
\end{eqnarray*}
Thus, since $\dis\sup_{(0,A_{\f{r}{\omega}})} x^{r_0} \tau_\al\underset{\al\to0}{\longrightarrow}0,$ we obtain
$$\limsup_{\al\to0}\theta_\al\leq\limsup_{\al\to0} \sup_{(0,\e)}\beta_\al\leq C \e^\gamma.$$
This is true for all $\e>0,$ 
so  Assertion~\ref{th:relax:xzero} of Proposition~\ref{th:relax} is proved (the same proof works with the fragmentation parameter $\af$).

\end{proof}

\begin{proof}[Proof of Proposition \ref{th:relax}.\ref{th:relax:xinfinity}.]

We perform the dilation defined by \eqref{eq:defval} $v_\al(x)=\al^k \U_\al(\al^k x)$ with $k=\f{1}{1+\gamma-\nu}.$ 
Due to Assumption \eqref{as:taunu:betagamma:L:gen2} for $L=+\infty,$ we still have the conclusion of Lemma~\ref{lm:tech3} and it is sufficient to bound $\tau_\al v_\al$ on $(0,\e)$ for $\e>0.$ We refer to the proof of Theorem \ref{th:self} in Section \ref{ssec:proof:self}, second estimate, and write
\begin{eqnarray*}
\tau_\al(x)v_\al(x)&\leq&2\sup_{(0,\e)}\{\tau_\al v_\al\}\int_0^\e\f{\beta_\al(y)}{\tau_\al(y)}\,dy+ 2\int_\e^\infty\beta_\al(y)v_\al(y)\,dy\\
&\leq&2\rho\sup_{(0,\e)}\{\tau_\al v_\al\}+2 \sup_{(\e,+\infty)} \beta_\al .
\end{eqnarray*}
Taking for instance $\e$ small enough so that $\rho\leq\f{1}{4}$ in this estimate, and $\al$ large enough so that $\sup_{(\e,+\infty)} \beta_\al \leq C,$ we obtain the boundedness of $\tau_\al v_\al$ on $(0,\e_0).$ 
Then for $\e>0$ we write
\begin{eqnarray*}
\al^{-\f{\gamma}{1+\gamma-\nu}}\lb_\al=\theta_\al&=&\int\beta_\al v_\al\\
&\leq&\underbrace{\sup_{(0,\e)}\tau_\al v_\al}_{\leq C}\underbrace{\int_0^\e\f{\beta_\al}{\tau_\al}}_{\xrightarrow[\e\to0]{}0}+\underbrace{\sup_{(\e,\infty)}\beta_\al}_{\xrightarrow[\al\to\infty]{}0}.
\end{eqnarray*}
The latter estimate is a consequence of the assumption $\beta(x)\underset{x\rightarrow\infty}{=} o( x^\gamma)$.

We do the same computations for the parameter $\af.$

\end{proof}

%
%
%

\

%
%

\bibliographystyle{abbrv}
\bibliography{Prion}

\end{document}